\newtheorem{theorem}{Theorem}[section]
\newtheorem{lemma}[theorem]{Lemma}
\newtheorem{cor}[theorem]{Corollary}
\newtheorem{conj}[theorem]{Conjecture}
\newtheorem*{thm}{Theorem}
\theoremstyle{definition}
\newtheorem*{defn-non}{Definition}
\newcounter{propcounter}
\newcommand{\eps}{\varepsilon}
\newcommand{\ex}{\mathrm{ex}}
\title{Rainbow spanning structures in strongly edge-colored graphs}
\author{Laihao Ding\thanks{School of Mathematics and Statistics, and Key Laboratory of Nonlinear Analysis \& Applications (Ministry of Education), Central China Normal University,
Wuhan, China. LHD is supported by the National Natural Science Foundation of China (11901226), the Fundamental Research Funds for the Central Universities (CCNU25JCPT031). XLH is supported by the National Natural Science Foundation of China (12471325). Emails: dinglaihao@ccnu.edu.cn, xlhu@ccnu.edu.cn.}
\and Xiaolan Hu\footnotemark[1] 
\and Suyun Jiang\thanks{School of Artificial Intelligence, Jianghan University, Wuhan, Hubei, China. 
Supported by Hubei Provincial Natural Science Foundation of China (2025AFB666) and National Natural Science Foundation of China (11901246).
Email: jiang.suyun@163.com.}
}
\date{}
\begin{document}

\maketitle
\begin{abstract}
An edge-colored graph is a graph in which each edge is assigned a color.
Such a graph is called strongly edge-colored if each color class forms an induced matching, and called rainbow if all edges receive pairwise distinct colors. In this paper, by establishing a connection with $\mu n$-bounded graphs, we prove that for all sufficiently large integers $n$, every strongly edge-colored graph $G$ on $n$ vertices with minimum degree at least $\frac{n+1}{2}$ contains a rainbow Hamilton cycle. We also characterize all strongly edge-colored graphs on $n$ vertices with minimum degree exactly $\frac{n}{2}$ that do not contain a rainbow Hamilton cycle. As an application, we determine the optimal minimum degree conditions for the existence of rainbow Hamilton paths and rainbow perfect matchings in strongly edge-colored graphs. 
Together, these results verify three conjectures concerning strongly edge-colored graphs for sufficiently large $n$.

\medskip
\noindent
\textit{Keywords}: Strongly edge-colored graphs; Rainbow Hamilton cycles; Rainbow Hamilton paths; Rainbow perfect matchings; $\mu n$-bounded graphs
\end{abstract}

\section{Introduction}

All graphs considered in this article are simple and finite. For terminology and notation not defined here, we refer the reader to \cite{bondy}. An \textit{edge-colored graph} is a graph in which each edge is assigned a color. For an edge-colored graph $G$, we denote by $\mathcal{C}_G$ the set of all colors appearing on the edges of $G$. A \textit{color class} of $G$ is a maximal subset of $E(G)$ in which all edges have the same color. If every color class of $G$ is a matching, an induced matching, a single edge, or has at most $k$ edges, then $G$ is said to be \textit{properly edge-colored}, \textit{strongly edge-colored}, \textit{rainbow}, or \textit{$k$-bounded}, respectively. For every vertex $v\in V(G)$, the \textit{color degree} of $v$ is the number of colors appearing on the edges incident to $v$. The \textit{minimum color degree} of $G$, denoted by $\delta^c(G)$, is the minimum color degree over all vertices. Finally, we write $d_G(v)$ for the degree of a vertex $v$ in $G$, and $\delta(G)$ for the minimum degree of $G$.   

The study of finding certain structures in edge-colored graphs has a long history. By now, a wide range of topics concerning the existence of monochromatic, rainbow, or properly edge-colored subgraphs under various settings have been developed. Prominent examples include Ramsey theory, rainbow Tur\'{a}n problems \cite{rainbowcycle2,evencycle,rainbowturan,rainbowcycle1}, edge-colored Dirac-type problems \cite{boundedmat,bounded,lo}, and transversals in graph systems \cite{cheng1}.  These developments have also revealed connections to other areas of mathematics, such as quasi-random hypergraphs, digraph theory, and additive number theory.

In the past decade, finding properly edge-colored or rainbow cycles under color degree conditions has been extensively studied. 
In 2013, Li \cite{lihao} proved that every edge-colored $n$-vertex graph $G$ with $\delta^c(G)\geq\frac{n+1}{2}$ contains a rainbow triangle. Extending Li's result, Czygrinow, Molla, Nagle and Oursler \cite{oddcycle} proved that any rainbow $\ell$-cycle can be found with the same hypothesis whenever $n\geq 432\ell$. Moreover, for even $\ell$, they \cite{evencycle2} showed that $\delta^c(G)\geq\frac{n+5}{3}$ is enough to ensure a rainbow $\ell$-cycle if $n$ is sufficiently large compared to $\ell$. Ding, Hu, Wang and Yang \cite{ding} investigated the existence of properly edge-colored short cycles under color degree conditions, and established a close connection to the problem of finding directed cycles under out-degree conditions. 

Compared to short cycles, finding long properly edge-colored or rainbow cycles seems to be more difficult. A celebrated result due to Lo \cite{lo} states that any edge-colored $n$-vertex graph $G$ with $\delta^c(G)\geq\frac{2n}{3}$ contains a properly edge-colored Hamilton cycle if $n$ is sufficiently large. For the rainbow version, Maamoun and Meyniel \cite{norainbowHC} found proper edge-colorings of complete graphs $K_n$ without rainbow Hamilton path for $n=2^k$, disproving a conjecture of Hahn that every properly edge-colored complete graph admits a rainbow Hamilton path. Therefore, no rainbow Hamilton cycle is ensured even in a properly edge-colored complete graph. Motivated by this, two natural questions arise: what is the maximum length of a rainbow cycle in a properly edge-colored complete graph, and when does a rainbow Hamilton cycle exist in an edge-colored graph.

For the first question,  Alon, Pokrovskiy and Sudakov \cite{rainbowHC} proved that every properly edge-colored complete graph $K_n$ contains a rainbow cycle of length at least $n-O(n^{3/4})$. Later, Balogh and Molla \cite{rainbowHC2} improved this result by reducing the term $O(n^{3/4})$ to $O(\sqrt{n}\log n)$. For the second one, Coulson and Perarnau \cite{bounded} proved that every $\mu n$-bounded $n$-vertex graph $G$ with $\delta(G)\geq\frac{n}{2}$ has a rainbow Hamilton cycle for sufficiently large $n$ and sufficiently small $\mu$. 
Cheng, Sun, Tan and Wang \cite{cheng2} considered strongly edge-colored graphs and proved the following.

\begin{theorem}[Cheng, Sun, Tan and Wang, \cite{cheng2}]\label{2/3}
Every strongly edge-colored $n$-vertex graph $G$ with $\delta(G)\geq\frac{2n}{3}$
has a rainbow Hamilton cycle.   
\end{theorem}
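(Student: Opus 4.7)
The plan is to use the classical rotation–extension technique, adapted to the rainbow setting, exploiting two key consequences of the strong edge-coloring hypothesis. First I would establish a color-frequency bound: in any strongly edge-colored graph with $\delta(G)\ge 2n/3$, each color class has size at most $\lfloor n/6\rfloor+1$. Indeed, if a color $c$ appears on an induced matching of size $k$ with one of its edges being $xy$, then the $2(k-1)$ endpoints of the other matching edges must all be non-neighbors of $x$ (otherwise the induced condition is violated), and since $x$ has at most $n-1-\delta(G)\le n/3-1$ non-neighbors, we obtain $2(k-1)\le n/3-1$. A second useful observation is that, since every color class is a matching, any two edges sharing a common vertex receive distinct colors, so every triangle (and more generally every star) of $G$ is automatically rainbow; in particular rainbow subgraphs are plentiful to start from.

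Next, let $P=v_0v_1\cdots v_t$ be a longest rainbow path in $G$ and assume for contradiction that $t+1<n$. By maximality every neighbor of $v_0$ and $v_t$ lies on $P$, so both endpoints have at least $2n/3$ neighbors on $P$. I would then perform rainbow P\'osa rotations at $v_t$: for each neighbor $v_i$ of $v_t$ on $P$, replace $v_iv_{i+1}$ by $v_iv_t$ to obtain a rainbow path of the same length ending at $v_{i+1}$, provided the color of $v_tv_i$ does not appear on the remaining edges of $P$. Since each color class is a matching, at most one edge incident to $v_t$ has any given color, so the number of rotations excluded by an immediate color conflict is at most $t$. The color-frequency bound is what allows us to iterate rotations without the set of admissible edges collapsing.

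After producing large sets $S,S'$ of candidate new endpoints via rotations at the two ends, the plan is to close up into a rainbow cycle $C$ on $V(P)$ using a Chv\'atal--Erd\H{o}s-style edge count between $S$ and $S'$, and then extend $C$ to a rainbow Hamilton cycle. For any $v\in V(G)\setminus V(C)$, the condition $\delta(G)\ge 2n/3$ forces at least $|V(C)|-n/3$ neighbors of $v$ on $C$; among these I would locate a neighbor $u$ with successor $u^+$ such that both $vu$ and $vu^+$ carry colors not appearing on $C\setminus\{uu^+\}$. The color-frequency bound shows that only $O(n)$ neighbors of $v$ can be ruled out by such a color conflict, so a valid $u$ must exist, producing a strictly longer rainbow cycle and contradicting the choice of $P$.

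The hard part is the color bookkeeping during the rotation phase: unlike in the classical P\'osa argument, each swap may destroy the rainbow property if the removed and added edges are chosen carelessly, and after many iterations the accumulated color restrictions could in principle forbid all further moves. The induced-matching structure is what keeps this under control, since it forces any two edges of the same color to lie at mutual graph distance at least two, preventing the bad edges from clustering near any single vertex and ensuring that the rotation digraph remains dense enough to close up into a rainbow cycle.
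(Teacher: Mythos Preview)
This theorem is not proved in the present paper; it is quoted as a prior result of Cheng, Sun, Tan and Wang. The paper's own contribution is Theorem~\ref{HC}, which strengthens the degree threshold to $\delta(G)\ge n/2$ for sufficiently large $n$, and the method there is entirely different from rotation--extension: via the Ruzsa--Szemer\'edi induced matching theorem (Corollary~\ref{almostbd}) the authors show that any strongly edge-colored $n$-vertex graph becomes $\mu n$-bounded after removing at most $\mu^2 n$ edges at each vertex, and then they invoke the switching machinery of Coulson and Perarnau for $\mu n$-bounded Dirac graphs together with the structural trichotomy of Lemma~\ref{class}.

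Your rotation plan is the natural elementary route and is presumably close in spirit to the original argument in \cite{cheng2}, but the counting you wrote down does not close. You bound the number of forbidden pivots by $t$ (one for each colour on $P$), yet by maximality every neighbour of $v_t$ lies on $P$, so $t\ge d_G(v_t)\ge 2n/3$, and the estimate $d_G(v_t)-t$ is nonpositive; as stated you could lose every rotation. The repair is precisely the \emph{induced}-matching property you mention only informally in your last paragraph, and it must be invoked already at this step, not merely for the iteration: if the pivot at $v_i$ fails because $c(v_tv_i)=c(v_jv_{j+1})$ for some $j\ne i$, then since that colour class is an induced matching, both $v_j$ and $v_{j+1}$ are non-neighbours of $v_t$. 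Hence the bad pivots inject into indices $j$ with $v_j,v_{j+1}\notin N_G(v_t)$, and as $v_t$ has at most $n/3-1$ non-neighbours there are at most $n/3-1$ such $j$. This leaves at least $2n/3-(n/3-1)=n/3+1$ valid rotations, which is what drives the argument; the global frequency bound $|E_c|\le n/6+1$ is correct but plays no role in this step. The same refinement is needed when you repeat rotations and when you close into a cycle, so the colour bookkeeping you flag as ``the hard part'' has to be done with this non-neighbour count rather than with the frequency bound.
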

 
In addition, Cheng, Sun, Tan and Wang \cite{cheng2} proposed the following two conjectures and showed that the conjectured minimum degree conditions, if valid, will be optimal.  

\begin{conj}[Cheng, Sun, Tan and Wang, \cite{cheng2}]\label{cycle}
Every strongly edge-colored $n$-vertex graph $G$ with $\delta(G)\geq\frac{n+1}{2}$
has a rainbow Hamilton cycle.
\end{conj}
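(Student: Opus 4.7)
The plan is to reduce the problem to the Coulson--Perarnau theorem on $\mu n$-bounded graphs by separating the color classes of $G$ by size. Fix a small constant $\mu>0$ for which Coulson--Perarnau applies, and call a color \emph{heavy} if its color class (an induced matching, since $G$ is strongly edge-colored) has at least $\mu n$ edges, and \emph{light} otherwise. From $\delta(G)\ge (n+1)/2$ together with the fact that each endpoint of a heavy color $c$ has exactly one $M_c$-neighbor inside $V(M_c)$, one gets at once $|M_c|\le (n+1)/4$ and strong local constraints on how distinct heavy colors can meet at a given vertex. The heart of the argument should be a structural/counting lemma quantifying these constraints: the number of heavy colors incident to any fixed vertex is bounded by a quantity depending only on $\mu$, so that after deleting the ``non-reserved'' heavy edges at each vertex one loses only $O(1)$ degrees.

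With such a structural lemma in hand, I would proceed by an absorption-plus-extension scheme. First, build a short rainbow \emph{absorbing path} $P_{\mathrm{abs}}$ that uses exactly one carefully chosen representative edge of every heavy color and enjoys the standard absorption property inside the light-colored part of $G$: any small leftover vertex set can be swallowed into $P_{\mathrm{abs}}$ using only light, so-far-unused colors. Next, delete from $G$ all heavy edges not already used in $P_{\mathrm{abs}}$ and temporarily delete $V(P_{\mathrm{abs}})$ to obtain a subgraph $G'$ which, by construction, is $\mu n$-bounded and, by the structural lemma, still satisfies $\delta(G')\ge n/2$. Then apply the Coulson--Perarnau theorem (or its path analogue) to $G'$, forbidding the colors already used on $P_{\mathrm{abs}}$, to produce a long rainbow path covering almost all of $V(G')$; absorb the few leftover vertices back into $P_{\mathrm{abs}}$ via its absorption property; and close the resulting rainbow Hamilton path into a rainbow Hamilton cycle of $G$.

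The main obstacle is the razor-thin gap between the hypothesis $\delta(G)\ge (n+1)/2$ and the Coulson--Perarnau threshold $\delta\ge n/2$, which is only $1/2$ fractionally and $0$ when $n$ is odd. Any reduction to a $\mu n$-bounded subgraph can therefore afford to lose at most a constant number of edges per vertex, which is possible only if each vertex is incident to boundedly many heavy colors. Establishing that bound is where the strong edge-coloring hypothesis must be used most forcefully: every heavy color $c$ incident to $v$ forces at least $2\mu n -2$ non-neighbors of $v$ inside $V(M_c)\setminus\{v\}$, and one has to analyse the overlaps of these forbidden sets across all heavy colors at $v$ simultaneously. I expect some degenerate extremal configurations (where the forbidden sets of distinct heavy colors at $v$ coincide almost entirely) to require a separate, ad hoc treatment before the generic reduction can go through.
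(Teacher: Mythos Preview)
Your plan hinges on the claim that every vertex meets only $O_\mu(1)$ heavy colors, and this is where the argument breaks. The observation that a heavy color $c$ through $v$ forces $2\mu n-2$ non-neighbours of $v$ inside $V(M_c)$ is correct, but the forbidden sets $V(M_{c_i})\setminus\{v,w_i\}$ for different heavy colors can overlap almost completely, so no disjointness count gives a constant bound. In fact, once you strip off the edges $vw_i$, the matchings $M_{c_i}\setminus\{vw_i\}$ are edge-disjoint induced matchings of $G[\overline{N[v]}]$, a graph on roughly $n/2$ vertices; Behrend-type constructions show that such a graph can carry $n^{1-o(1)}$ edge-disjoint induced matchings of size $n^{1-o(1)}$, and there is no obstruction to completing this to a strongly edge-colored graph with $\delta\ge (n+1)/2$. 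The only general bound available here is the Ruzsa--Szemer\'edi induced matching theorem, and that yields $o(n)$ heavy colors, not $O(1)$. Consequently your reduction to a $\mu n$-bounded subgraph costs $o(n)$ (not $O(1)$) in minimum degree, and you land strictly below the Coulson--Perarnau threshold. The absorption layer does not help with this; if anything, carving out $V(P_{\mathrm{abs}})$ subtracts further from the degree.

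What the paper does is exactly to accept this degree loss and work around it. Using the induced matching theorem it shows there are fewer than $\mu^2 n$ heavy colors \emph{in total}; deleting them yields a $\mu n$-bounded spanning subgraph $G'$ with $\delta(G')\ge n/2-\mu^2 n$. That is not enough to invoke Coulson--Perarnau as a black box, so instead the paper opens up their proof: it applies the standard trichotomy (robust $(\nu,\tau)$-expander, $\gamma$-close to $2K_{n/2}$, $\gamma$-close to $K_{n/2,n/2}$) to $G$ itself, and in each case feeds $G'$ into the internal switching lemma of Coulson--Perarnau, where a minimum-degree deficit of $\mu^2 n\ll\nu n$ is harmless. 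No absorption is used anywhere. If you want to salvage your outline, the fix is not a sharper per-vertex count on heavy colors but rather to abandon the black-box reduction and argue inside the Coulson--Perarnau framework, as the paper does.
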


\begin{conj}[Cheng, Sun, Tan and Wang, \cite{cheng2}]\label{path}
Every strongly edge-colored $n$-vertex graph $G$ with $\delta(G)\geq\frac{n}{2}$
has a rainbow Hamilton path.
\end{conj}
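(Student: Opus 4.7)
The plan is to deduce Conjecture~\ref{path} from the main rainbow Hamilton cycle theorem announced in the abstract (every sufficiently large strongly edge-colored $G$ on $n$ vertices with $\delta(G)\geq\frac{n+1}{2}$ contains a rainbow Hamilton cycle) via the standard trick of attaching a single apex vertex colored with fresh colors. Given a strongly edge-colored $G$ on $n$ vertices with $\delta(G)\geq\frac{n}{2}$ and $n$ sufficiently large, I would build an auxiliary graph $G^\ast$ on $n+1$ vertices by introducing a new vertex $v^\ast$ adjacent to every vertex of $V(G)$ and assigning $n$ pairwise distinct new colors (none of which appear in $\mathcal{C}_G$) to the edges incident to $v^\ast$.

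Next I would verify that $G^\ast$ satisfies the hypotheses of the cycle theorem. Each original color class of $G$ remains an induced matching in $G^\ast$: its endpoints lie in $V(G)$, and $G^\ast[V(G)]=G$, so no new edges appear among those endpoints. Each new color class consists of a single edge incident to $v^\ast$ and is trivially an induced matching, so $G^\ast$ is strongly edge-colored. For degrees, $d_{G^\ast}(v^\ast)=n$ and every $u\in V(G)$ satisfies $d_{G^\ast}(u)=d_G(u)+1\geq\lceil n/2\rceil+1$, hence
\[
\delta(G^\ast)\;\geq\;\lceil n/2\rceil+1\;\geq\;\frac{n+2}{2}\;=\;\frac{|V(G^\ast)|+1}{2}.
\]

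Applying the cycle theorem to $G^\ast$, whose order $n+1$ is also sufficiently large, yields a rainbow Hamilton cycle $C^\ast$ of $G^\ast$. Deleting $v^\ast$ from $C^\ast$ removes exactly the two edges of $C^\ast$ incident to $v^\ast$ and leaves a spanning path $P$ of $G$ that uses only colors of $G$ and inherits the rainbow property from $C^\ast$; this $P$ is the desired rainbow Hamilton path. I expect all the real difficulty to live inside the cycle theorem itself: the path reduction is essentially bookkeeping, and the only delicate point is insisting that the $n$ colors at $v^\ast$ be genuinely fresh, which simultaneously ensures that the induced-matching condition survives in $G^\ast$ and that the final path, lying entirely in $G$, is automatically rainbow.
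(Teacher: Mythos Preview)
Your proposal is correct and takes essentially the same approach as the paper: the paper also attaches an apex vertex joined to all of $V(G)$ by edges with fresh colors, verifies that the resulting graph is strongly edge-colored with the right minimum degree, and then invokes the rainbow Hamilton cycle theorem (Theorem~\ref{HC}). The only cosmetic difference is that the paper applies this trick under the weaker hypothesis $\delta(G)\geq\frac{n-1}{2}$ to obtain the sharper Theorem~\ref{HP} (so it must also track the extremal alternative of Theorem~\ref{HC}), whereas you work directly with $\delta(G)\geq\frac{n}{2}$ and therefore land safely above the $\frac{|V(G^\ast)|+1}{2}$ threshold, making the extremal case vacuous.
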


In this paper, we confirm the above two conjectures for sufficiently large $n$ by proving the following two stronger results, where Theorem~\ref{HP} is an immediate consequence of Theorem~\ref{HC}. Here, for any vertex set $V$, $G[V]$ denotes the induced subgraph of $G$ on $V$.

\begin{theorem}\label{HC}
There exists $n_0\in \mathbb{N}$ such that if $G$ is a strongly edge-colored $n$-vertex graph with $n\geq n_0$ and $\delta(G)\geq \frac{n}{2}$, then 
\begin{itemize}[leftmargin=2em]
    \item either $G$ contains a rainbow Hamilton cycle, 
    \item or $G$ admits a partition $V(G)=V_1\cup V_2$ with $|V_1|\leq |V_2|$ such that $G[V_2]$ is a $\left(\frac{|V_2|-|V_1|}{2}\right)$-regular graph and $\left|\mathcal{C}_{G[V_2]}\right|=|V_2|-|V_1|-1$.
\end{itemize}
\end{theorem}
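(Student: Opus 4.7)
The strategy is a dichotomy on the maximum color class size of $G$, reducing the ``sparse'' side to Coulson and Perarnau's theorem on rainbow Hamilton cycles in $\mu n$-bounded graphs. Fix a small constant $\mu>0$ below the Coulson--Perarnau threshold. If every color class of $G$ has at most $\mu n$ edges, then $G$ is $\mu n$-bounded with $\delta(G)\geq n/2$, so for $n$ large enough Coulson--Perarnau immediately yields a rainbow Hamilton cycle and we are done.

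Otherwise some color class has more than $\mu n$ edges. Because $G$ is strongly edge-colored this class is an induced matching $M$ with $|M|>\mu n$. Writing $U=V(M)$ and $W=V(G)\setminus U$, the inducedness of $M$ forces $G[U]=M$, so each vertex of $U$ has at least $n/2-1$ neighbors in $W$, giving $|W|\geq n/2-1$ and $|M|\leq (n+2)/4$. The decisive observation is that any rainbow subgraph can use at most one edge of $M$ (all $M$-edges share a color), so in any rainbow Hamilton cycle all but at most two vertices of $U$ must have both cyclic neighbors in $W$. This imposes a strong bipartite-like constraint which I would exploit via a stability-type analysis: guided by the target configuration, set up a candidate bipartition $V_1\subseteq W$ and $V_2\supseteq U$, and examine the remaining color classes together with the bipartite subgraph $G[V_1,V_2]$. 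Either one identifies a small deviation from the extremal structure---an unexpected color class inside $V_2$, a missing edge in $G[V_1,V_2]$, or a large induced matching unaccounted for---which can be incorporated via an absorbing-and-extension argument to assemble a rainbow Hamilton cycle, or $G$ already matches the extremal configuration: $G[V_1,V_2]$ is complete bipartite (forced by the degree equality $\deg_G(v)=|V_1|+(|V_2|-|V_1|)/2=n/2$ for $v\in V_2$), $G[V_2]$ is $(|V_2|-|V_1|)/2$-regular with exactly $|V_2|-|V_1|-1$ color classes, and the second conclusion of the theorem holds. A short counting argument confirms that no rainbow Hamilton cycle exists in this configuration: a Hamilton cycle with $k$ arcs in $V_2$ uses $|V_2|-k$ edges inside $V_2$, which must be rainbow, so $|V_2|-k\leq |V_2|-|V_1|-1$ forces $k\geq |V_1|+1$, contradicting $k\leq |V_1|$.

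The main obstacle is the stability step in the unbounded case. The extremal configuration is rigid---complete bipartiteness of $G[V_1,V_2]$, exact regularity of $G[V_2]$, and a precise color count---yet the hypothesis $\delta(G)\geq n/2$ still leaves substantial room for near-extremal examples, so one has to show that even tiny departures from this configuration provide enough combinatorial slack to build a rainbow Hamilton cycle while respecting color distinctness. Carrying this out requires a delicate absorbing/extension argument sensitive to both the rainbow constraint and the extremal geometry, and is expected to be the most technical part of the proof; the bounded case, by contrast, is essentially a black-box application of Coulson--Perarnau.
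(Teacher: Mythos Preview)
Your dichotomy is natural, and the bounded side is indeed a black-box application of Coulson--Perarnau. The gap is in the unbounded side. Having one color class of size exceeding $\mu n$ tells you almost nothing structurally: $\mu$ is a tiny constant, so $U=V(M)$ may occupy only a $2\mu$-fraction of the vertices, and the graph $G[W]$ on the remaining $(1-2\mu)n$ vertices is essentially unconstrained. Your candidate bipartition ``$V_1\subseteq W$, $V_2\supseteq U$'' is not determined by $M$, and there is no mechanism explaining why $G$ should be close to the extremal bipartite-like configuration just because one induced matching is large. A strongly edge-colored $G$ with $\delta(G)\ge n/2$ can simultaneously have a color class of size $\mu n$ and be, say, a robust expander far from any bipartite structure; your stability sketch does not cover this, and the absorbing/extension step you allude to is where the entire difficulty of the theorem lives.

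The paper's route is different and avoids this trap. The crucial observation (via the Ruzsa--Szemer\'edi induced matching theorem) is that in a strongly edge-colored graph there can be at most $\mu^2 n$ color classes of size exceeding $\mu n$, so deleting all of them costs each vertex at most $\mu^2 n$ in degree. Thus \emph{every} strongly edge-colored $G$ is, after a negligible edge deletion, $\mu n$-bounded. One then runs the standard trichotomy (robust expander, close to $2K_{n/2}$, close to $K_{n/2,n/2}$) on $G$ itself, and in each case applies Coulson--Perarnau's internal switching lemma to the trimmed $\mu n$-bounded subgraph. The extremal alternative in the theorem arises only in the $K_{n/2,n/2}$ case, when $G[V_2]$ fails to contain a rainbow matching of size $|V_2|-|V_1|$; a short argument (your Lemma~\ref{matching}-style counting) then pins down the exact structure. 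So the missing idea in your plan is not a sharper stability analysis but the Ruzsa--Szemer\'edi reduction, which dissolves your dichotomy entirely.
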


\begin{theorem}\label{HP}
There exists $n_0\in \mathbb{N}$ such that if $G$ is a strongly edge-colored $n$-vertex graph with $n\geq n_0$ and $\delta(G)\geq \frac{n-1}{2}$, then  
\begin{itemize}[leftmargin=2em]
    \item either $G$ contains a rainbow Hamilton path, 
    \item or $G$ admits a partition $V(G)=V_1\cup V_2$ with $|V_1|\leq |V_2|$ such that $G[V_2]$ is a $\left(\frac{|V_2|-|V_1|-1}{2}\right)$-regular graph and $\left|\mathcal{C}_{G[V_2]}\right|=|V_2|-|V_1|-2$.
\end{itemize}
\end{theorem}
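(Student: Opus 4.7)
The plan is to reduce Theorem~\ref{HP} to Theorem~\ref{HC} via a standard ``apex'' construction. Given a strongly edge-colored graph $G$ on $n$ vertices with $\delta(G)\geq\frac{n-1}{2}$, I form an auxiliary graph $G^*$ on $n+1$ vertices by adjoining a single new vertex $v^*$, joining $v^*$ to every vertex of $G$, and assigning each of the $n$ new edges its own fresh color not appearing in $G$. Each new color class is a single edge and is therefore trivially an induced matching; every old color class has its vertex set inside $V(G)$, and since $v^*$ introduces no edges between vertices of $V(G)$, the old classes remain induced matchings in $G^*$. Thus $G^*$ is strongly edge-colored. Every vertex of $V(G)$ gains exactly one new neighbor and $v^*$ has degree $n$, so $\delta(G^*)\geq\frac{n-1}{2}+1=\frac{n+1}{2}=\frac{|V(G^*)|}{2}$.

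I then apply Theorem~\ref{HC} to $G^*$ (choosing $n_0$ one larger than the threshold supplied by Theorem~\ref{HC} if necessary). If $G^*$ contains a rainbow Hamilton cycle $C$, deleting the two edges of $C$ incident to $v^*$ yields a rainbow Hamilton path of $G$: the remaining edges all lie in $G$, use pairwise distinct colors, and none of them are new. Otherwise $V(G^*)=V_1\cup V_2$ with $|V_1|\leq|V_2|$, the graph $G^*[V_2]$ is $\frac{|V_2|-|V_1|}{2}$-regular, and $|\mathcal{C}_{G^*[V_2]}|=|V_2|-|V_1|-1$. A short check shows $v^*\in V_1$: if $v^*\in V_2$ instead, then its degree $|V_2|-1$ inside $G^*[V_2]$ must equal $\frac{|V_2|-|V_1|}{2}$, forcing $|V_1|+|V_2|=2$ and hence $n=1$. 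Setting $V_1':=V_1\setminus\{v^*\}$ and $V_2':=V_2$ produces a partition of $V(G)$ with $|V_1'|\leq|V_2'|$. Since no edge of $G^*[V_2]$ is incident to $v^*$, we have $G[V_2']=G^*[V_2]$; rewriting $\frac{|V_2|-|V_1|}{2}=\frac{|V_2'|-|V_1'|-1}{2}$ and $|V_2|-|V_1|-1=|V_2'|-|V_1'|-2$ yields exactly the extremal structure required by Theorem~\ref{HP}.

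The only subtle point in this reduction is ensuring that the apex does not destroy the strong edge-coloring hypothesis, which is handled automatically by using private fresh colors on the $n$ pendant edges at $v^*$. Beyond that, the deduction is mechanical: a degree increment, extraction of a path from a cycle by removing the two edges at $v^*$, and translation of the extremal partition of $G^*$ back to $G$. Consequently all of the real work of the paper lies in Theorem~\ref{HC}; Theorem~\ref{HP} drops out as an immediate corollary.
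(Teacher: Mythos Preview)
Your proposal is correct and follows essentially the same apex construction as the paper: adjoin a universal vertex $v^*$ with fresh colors, apply Theorem~\ref{HC} to the resulting $(n+1)$-vertex graph, and read off either a rainbow Hamilton path (by deleting the two edges at $v^*$) or the extremal partition (after checking $v^*\in V_1$). The paper's argument that $v^*\in V_1$ is phrased slightly differently but amounts to the same degree-count contradiction you give.
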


\begin{proof}[Proof of Theorem \ref{HP}]
Let $G$ be a strongly edge-colored graph on $n$ vertices with $\delta(G)\geq\frac{n-1}{2}$. Now we construct a new graph $H$ by joining a new vertex $v$ to all vertices of $G$ and painting these new edges with distinct colors not in $\mathcal{C}_G$. It is routine to verify that 
\begin{itemize}[leftmargin=2em]
  \item $\delta(H)\geq\frac{n+1}{2}$,
  \item $H$ is a strongly edge-colored graph, and
  \item $G$ has a rainbow Hamilton path if and only if $H$ has a rainbow Hamilton cycle.
\end{itemize}
By Theorem \ref{HC}, there exists $n_0\in \mathbb{N}$ such that if $n\geq n_0$, then either $H$ contains a rainbow Hamilton cycle, or $H$ admits a partition $V(H)=U_1\cup U_2$ with $|U_1|\leq |U_2|$ such that $H[U_2]$ is a $\left(\frac{|U_2|-|U_1|}{2}\right)$-regular graph and $\left|\mathcal{C}_{H[U_2]}\right|=|U_2|-|U_1|-1$. Therefore, if $H$ contains a rainbow Hamilton cycle, then $G$ has a rainbow Hamilton path. Otherwise, as $v$ is adjacent to all vertices of $G$ and $H[U_2]$ is a $\left(\frac{|U_2|-|U_1|}{2}\right)$-regular graph, we conclude that $v\in U_1$. So we get the desired partition of $V(G)$ by letting $V_1=U_1\setminus\{v\}$ and $V_2=U_2$.
\end{proof}

It is worth pointing out that combined with the following \Cref{regular}, our results actually characterize all the extremal graphs. To state \Cref{regular}, we need the following notation.
Given two graphs $F$ and $H$, a \textit{homomorphism} from $F$ to $H$ is a map $f: V(F)\rightarrow V(H)$ such that if $uv\in E(F)$, then $f(u)f(v)\in E(H)$. Moreover, we say that $F$ \textit{covers} $H$ if there is a surjective homomorphism $f$ from $F$ to $H$ such that for every vertex $v\in V(F)$ its neighborhood is bijectively mapped onto the neighborhood of $f(v)$. 
The \textit{Kneser graph} $K(k,s)$, with $k\geq 2s+1$ and $s\geq 2$, is the graph with vertex set $\binom{[k]}{s}$ such that two vertices are adjacent if and only if they are disjoint. The following result characterizes all the $k$-regular graphs with strong chromatic index $2k-1$.

\begin{theorem}[Lu\v{z}ar, M\'{a}\v{c}ajov\'{a},  \v{S}koviera and Sot\'{a}k, \cite{strongcolor}]\label{regular}
A $k$‐regular graph $G$ admits a strong edge coloring with $2k-1$ colors if and only if $G$ covers the Kneser graph $K(2k-1,k-1)$.    
\end{theorem}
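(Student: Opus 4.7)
The plan is to set up a canonical correspondence between strong $(2k-1)$-edge-colorings of $G$ and covering maps $G\to K(2k-1,k-1)$. For each vertex $v$ let $A_v\subseteq[2k-1]$ denote the ``palette'' of colors appearing on edges at $v$; by $k$-regularity and the fact that a strong coloring is in particular proper, $|A_v|=k$. The key idea is then to define $f(v):=[2k-1]\setminus A_v$, which is a $(k-1)$-subset of $[2k-1]$ and hence a vertex of the Kneser graph.

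For the forward direction, my first and central step is to establish the \emph{palette identity} $A_u\cap A_v=\{c\}$ for every edge $uv$ of color $c$: a second common color $c'\neq c$ would be carried by distinct edges $uu'$ and $vv'$, and $uv\in E(G)$ would then join the endpoints of this monochromatic pair, contradicting the induced-matching condition. Consequently $|A_u\cup A_v|=2k-1$, so $f(u),f(v)$ are disjoint and $f(u)f(v)\in E(K(2k-1,k-1))$, making $f$ a homomorphism. The same identity gives $f(u)=A_v\setminus\{c(uv)\}$ for every neighbor $u$ of $v$; since the $k$ edges at $v$ carry the $k$ distinct colors of $A_v$ and the neighbors of $f(v)$ in $K(2k-1,k-1)$ are exactly $\{A_v\setminus\{a\}:a\in A_v\}$, the restriction $f|_{N_G(v)}$ is a bijection onto $N_{K(2k-1,k-1)}(f(v))$. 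Surjectivity then follows from the standard path-lifting argument applied to each connected component of $G$, using that $K(2k-1,k-1)$ is connected.

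For the backward direction, given a cover $f$ I color each edge $uv$ by the unique element of $[2k-1]\setminus(f(u)\cup f(v))$, well-defined since $f(u)$ and $f(v)$ are disjoint $(k-1)$-sets. Properness at $v$ is immediate: two edges at $v$ with the same color $c$ would have $f$-images both equal to $A_v\setminus\{c\}$, and local bijectivity at $v$ would identify their other endpoints. The main verification is the strong-coloring property: if $uv$ and $xy$ share color $c$ and an edge $ux$ exists in $G$, then local bijectivity at $u$ forces $f(x)=A_u\setminus\{c(ux)\}$, hence $A_x=f(u)\cup\{c(ux)\}$, and $c\in A_x\setminus f(u)$ gives $c=c(ux)$; properness at $u$ then yields $v=x$, contradicting vertex-disjointness of $uv$ and $xy$.

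The main obstacle I anticipate is exactly the palette identity $A_u\cap A_v=\{c\}$: this is where the strongness of the coloring and the tightness of the color count $2k-1$ really meet, and it is what converts the combinatorics of an extremal strong coloring into the Kneser structure. Once that identity is in hand, the rest is clean set-theoretic bookkeeping inside $[2k-1]$ together with standard covering-map arguments, with disconnected $G$ handled componentwise thanks to the connectivity of $K(2k-1,k-1)$.
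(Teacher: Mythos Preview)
The paper does not prove this theorem; it is quoted as an external result of Lu\v{z}ar, M\'{a}\v{c}ajov\'{a}, \v{S}koviera and Sot\'{a}k and used only to describe the extremal configurations arising in Theorems~\ref{HC} and~\ref{HP}. There is therefore no in-paper proof to compare against.

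That said, your argument is correct and is essentially the natural one. The palette identity $A_u\cap A_v=\{c(uv)\}$ is exactly the pivot: you verify it cleanly (a second common color would produce two vertex-disjoint $c'$-edges joined by $uv$, violating the induced-matching condition), and once it is in place the map $v\mapsto [2k-1]\setminus A_v$ is forced and the local bijection onto $N_{K(2k-1,k-1)}(f(v))=\{A_v\setminus\{a\}:a\in A_v\}$ is automatic. Your surjectivity step is fine: local bijectivity propagates the image along edges, and $K(2k-1,k-1)$ (an odd graph) is connected, so each component of $G$ already surjects. In the converse, your computation $A_x=f(u)\cup\{c(ux)\}$ together with $c\in A_u$ (so $c\notin f(u)$) correctly forces $c=c(ux)$ and hence $x=v$ by properness, giving the induced-matching property. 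Nothing is missing.
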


At the end of this section, we use \Cref{HP} to settle another conjecture in strongly edge-colored graphs for sufficiently large $n$.
A Latin square of order $n$ is an $n\times n$ array, with each entry filled with an element from $[n]$, such that each element occurs exactly once in each row and in each column. A transversal of size $k$ in a Latin square is a set of $k$ elements, each of which comes from distinct rows and columns. The Ryser conjecture \cite{Ryser} states that every Latin square of order $n$ has a transversal of size $n$ if $n$ is odd. This conjecture is equivalent to that every properly edge-colored $K_{n,n}$ with $|\mathcal{C}_{K_{n,n}}|=n$ has a perfect rainbow matching if $n$ is odd. In \cite{wang}, Wang asked a more general question:  Is there a function $f(\delta)$ such that every properly edge-colored graph $G$ with $\delta(G)=\delta$ and $|V(G)|\geq f(\delta)$ contains a rainbow matching of size $\delta$? A result due to Lo \cite{lomat} states that $f(\delta)\leq 3.5\delta+2$, which is the best currently. Babu, Chandran and Vaidyanathan \cite{strongmat} first extended the above question to strongly edge-colored graphs. Later, Wang, Yan and Yu \cite{wang2} proved that every strongly edge-colored graph $G$ has a rainbow matching of size $\delta(G)$ if $|V(G)|\geq 2\delta(G)+2$, and conjectured the following.

\begin{conj}[Wang, Yan and Yu, \cite{wang2}]\label{conjmatching}
Let $G$ be a strongly edge-colored $n$-vertex graph with minimum degree $\delta$. If $n\in \{2\delta, 2\delta+1\}$, then $G$ has a rainbow matching of size $\delta$.
\end{conj}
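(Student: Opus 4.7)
The plan is to deduce Conjecture \ref{conjmatching} from Theorems \ref{HC} and \ref{HP} together with the Wang--Yan--Yu theorem cited above. When $n=2\delta$, the hypothesis $\delta(G)\geq n/2$ is exactly that of Theorem \ref{HC}, and when $n=2\delta+1$, the hypothesis $\delta(G)\geq(n-1)/2$ is exactly that of Theorem \ref{HP}; in either case the Hamiltonian alternative delivers the rainbow matching at once by taking alternate edges of the rainbow Hamilton cycle (resp.\ path), each of which has $2\delta$ edges. Thus the entire task reduces to handling the extremal configuration.

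Write $|V_1|=a\leq b=|V_2|$ and let $k$ denote the regularity of $G[V_2]$, so $k=(b-a)/2$ when $n$ is even and $k=(b-a-1)/2$ when $n$ is odd; in both cases $|\mathcal{C}_{G[V_2]}|=2k-1$. A one-line degree count gives $\delta(G)-k\geq a$, so every vertex $v\in V_2$ satisfies $V_1\subseteq N_G(v)$ and hence $K_{a,b}\subseteq G$. Exploiting $K_{a,b}$ together with the induced-matching property of each color class yields a clean color decomposition: if two distinct bipartite edges $u_1v_1$ and $u_2v_2$ shared a color, the edge $u_1v_2\in K_{a,b}$ would destroy the induced-matching condition, and analogous obstructions rule out a color shared between a bipartite edge and any $V_i$-internal edge, or between an edge of $G[V_1]$ and one of $G[V_2]$. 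Consequently the colors of $G$ partition into three pairwise disjoint pieces: $\mathcal{C}_{G[V_1]}$, $\mathcal{C}_{G[V_2]}$, and $ab$ \emph{bipartite} colors, each appearing on exactly one edge of $K_{a,b}$.

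With this structure in hand, it suffices to find a rainbow matching $M_2\subseteq G[V_2]$ of size $k$: any matching $M_1\subseteq K_{a,b}$ saturating $V_1$ by vertices of $V_2\setminus V(M_2)$ (which has size $a$ or $a+1$, so $M_1$ trivially exists) is automatically rainbow by the color decomposition, and $M_1\cup M_2$ is a rainbow matching of size $a+k=\delta$. The induced subgraph $G[V_2]$ is itself strongly edge-colored with $\delta(G[V_2])=k$, so the Wang--Yan--Yu theorem supplies such an $M_2$ as soon as $b\geq 2k+2$, which translates to $a\geq 2$ in the even case and $a\geq 1$ in the odd case.

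The main obstacle is the residual bookkeeping in the boundary cases $a\in\{0,1\}$ (even $n$) and $a=0$ (odd $n$), where the Wang--Yan--Yu hypothesis fails. Here Theorem \ref{regular} is decisive: $G[V_2]$ is $k$-regular with strong chromatic index $2k-1$, so each component must cover the Kneser graph $K(2k-1,k-1)$, forcing $|V_2|$ to be a sum of positive multiples of $\binom{2k-1}{k-1}$. Combined with the parity condition that $k|V_2|$ is even, and the rapid growth $\binom{2k-1}{k-1}\geq 10$ for $k\geq 3$, these divisibility and size constraints eliminate every boundary case except $k=1$, in which $G[V_2]$ is itself a perfect matching and the rainbow matching is constructed by hand.
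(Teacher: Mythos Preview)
Your argument is correct, but for the case $n=2\delta$ (even $n$) you take a considerably longer route than the paper. The paper observes that Theorem~\ref{PM} follows from Theorem~\ref{HP} in one line: since $\delta(G)\geq n/2>(n-1)/2$, Theorem~\ref{HP} applies, and its extremal alternative forces every $v\in V_2$ to satisfy
\[
d_G(v)\leq |V_1|+\frac{|V_2|-|V_1|-1}{2}=\frac{n-1}{2}<\frac{n}{2},
\]
contradicting $\delta(G)\geq n/2$. Hence the rainbow Hamilton path exists, and alternating edges give the rainbow perfect matching. By invoking Theorem~\ref{HC} instead, you do not get this free contradiction and are driven to the $K_{a,b}$ color-decomposition and the Kneser-graph divisibility to dispose of the extremal configuration. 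That machinery is sound (your observation that $K_{a,b}\subseteq G$ and the resulting tripartition of $\mathcal{C}_G$ are correct, and for large $n$ the boundary cases all have $k=\Theta(n)$, so $\binom{2k-1}{k-1}>|V_2|$ kills them---the separate treatment of $k=1$ is unnecessary here since Theorems~\ref{HC} and~\ref{HP} only apply to large $n$).

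What your approach does buy is a self-contained treatment of the odd case $n=2\delta+1$, which the paper simply cites to Cheng, Tan and Wang. There the degree condition $\delta(G)=(n-1)/2$ matches the threshold of Theorem~\ref{HP} exactly, so the extremal alternative is genuinely possible, and your $K_{a,b}$/Wang--Yan--Yu/Kneser argument is needed. In short: the paper's path is shorter for even $n$ because it exploits the strict inequality $\delta>(n-1)/2$ against the weaker Theorem~\ref{HP}; your path is uniform across parities and recovers the odd case without the external reference, at the cost of the extra structural analysis.
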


In \cite{cheng3}, Cheng, Tan and Wang proved this conjecture for $n=2\delta+1$. As a straightforward application of Theorem \ref{HP}, we settle the remaining case of Conjecture \ref{conjmatching} for sufficiently large $n$.

\begin{theorem}\label{PM}
There exists $n_0\in \mathbb{N}$ such that for any even integer $n\geq n_0$, if $G$ is a strongly edge-colored $n$-vertex graph with $\delta(G)\geq \frac{n}{2}$, then $G$ contains a rainbow perfect matching.
\end{theorem}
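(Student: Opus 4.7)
The plan is to derive Theorem~\ref{PM} as an essentially immediate corollary of Theorem~\ref{HP} by ruling out the extremal case via a parity argument, and then converting a rainbow Hamilton path into a rainbow perfect matching by alternation. Let $n_0$ be the threshold provided by Theorem~\ref{HP}, and let $G$ be a strongly edge-colored graph on an even number $n\geq n_0$ of vertices with $\delta(G)\geq n/2$. Since $n/2\geq (n-1)/2$, Theorem~\ref{HP} applies to $G$ and yields two possible outcomes.

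First I would dispose of the second (extremal) outcome. Suppose, for contradiction, that $V(G)=V_1\cup V_2$ with $|V_1|\leq |V_2|$ and $G[V_2]$ is a $\left(\tfrac{|V_2|-|V_1|-1}{2}\right)$-regular graph. For this regularity to be a nonnegative integer, $|V_2|-|V_1|-1$ must be even, so $|V_2|-|V_1|$ is odd. However, since $n=|V_1|+|V_2|$ is even, $|V_2|-|V_1|$ has the same parity as $|V_1|+|V_2|$ and must therefore be even. This contradiction shows that the second outcome cannot occur.

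Consequently, only the first outcome of Theorem~\ref{HP} is available, so $G$ contains a rainbow Hamilton path $P=v_1v_2\cdots v_n$. Since $n$ is even, the alternating edge set
\[
M=\{v_1v_2,\,v_3v_4,\,\ldots,\,v_{n-1}v_n\}
\]
consists of $n/2$ pairwise vertex-disjoint edges and is therefore a perfect matching of $G$. Because $M\subseteq E(P)$ and $P$ is rainbow, $M$ is rainbow, completing the proof.

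There is no serious obstacle in this argument: the entire content is packaged into Theorem~\ref{HP}, and the only work is the short parity observation that eliminates the extremal configuration when $n$ is even. The only thing worth double-checking is that Theorem~\ref{HP} does apply under the hypothesis $\delta(G)\geq n/2$ (which it does, via the weaker bound $(n-1)/2$) and that the Hamilton path indeed has an odd number of edges so that alternation yields a perfect matching; both are immediate.
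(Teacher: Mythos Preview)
Your proof is correct and follows exactly the approach the paper intends: the paper states that Theorem~\ref{PM} is ``a straightforward application of Theorem~\ref{HP}'' without giving details, and your argument supplies precisely those details---the parity observation that rules out the extremal partition when $n$ is even, followed by extracting a rainbow perfect matching from alternate edges of the rainbow Hamilton path.
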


\noindent\textbf{Notations.} Given a graph $G$ and two disjoint subsets $X,Y$ of $V(G)$, let $E_G(X,Y)=\{xy\in E(G) : x\in X, y\in Y\}$, and let $G[X,Y]$ be the bipartite graph with the vertex set $X\cup Y$ and the edge set $E_G(X,Y)$. For every $v\in V(G)$, we use $N_G(v,X)$ to denote the set of the neighbors of $v$ in $X$, and let $d_G(v,X)=|N_G(v,X)|$. 
In the following, we sometimes view an edge set as a graph.

\section{Preliminaries}

\subsection{Rainbow Hamilton cycles in $\mu n$-bounded graphs}

Strengthening the classical Dirac's theorem, Coulson and Perarnau \cite{bounded} proved the following theorem for $\mu n$-bounded graphs. 

\begin{theorem}[Coulson and Perarnau, \cite{bounded}]\label{bdHC}
There exists $\mu>0$ and $n_0\in \mathbb{N}$ such that if $n\geq n_0$ and $G$ is a $\mu n$-bounded $n$-vertex graph with $\delta(G)\geq \frac{n}{2}$, then $G$ contains a rainbow Hamilton cycle.
\end{theorem}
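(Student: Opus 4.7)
The plan is to prove Theorem~\ref{bdHC} via a color-aware absorbing method, using the $\mu n$-bounded hypothesis to control color conflicts at every stage. Call two edges \emph{conflicting} if they share a color; because each color class has at most $\mu n$ edges, any set $F$ of edges conflicts with at most $|F|\mu n$ further edges, and therefore forbids at most this many edges from any rainbow extension. This linear dependence on $\mu$ is the engine of the argument: so long as $|F|\ll n$, the blocked set has size $\ll n^2$, which is small compared to the $\Omega(n^2)$ edges guaranteed by the Dirac-type hypothesis $\delta(G)\ge n/2$.

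First I would construct a short \emph{rainbow absorbing path} $P_A$ with the property that for every sufficiently small $T\subseteq V(G)\setminus V(P_A)$, there is a rainbow Hamilton path of $G[V(P_A)\cup T]$ with the same endpoints as $P_A$ whose colors appear nowhere outside $V(P_A)\cup T$. The building block is an absorbing gadget for a single vertex $v$: a short rainbow path into which $v$ can be spliced by rerouting one edge. A double-counting argument using $\delta(G)\ge n/2$ gives a polynomial number of such gadgets for each $v$, and a random sample followed by an alteration step removing overlapping or color-conflicting gadgets produces $P_A$. Next, I would set aside a random \emph{reservoir} $R\subseteq V(G)\setminus V(P_A)$ with $|R|=\eta n$ (where $\mu\ll\eta\ll 1$) so that every vertex has $(1/2\pm o(1))|R|$ neighbors in $R$ through edges of pairwise distinct colors disjoint from the palette used by $P_A$. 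Chernoff concentration together with the $\mu n$-bounded hypothesis makes both properties routine via a union bound.

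The main step is to build an almost-spanning rainbow path $P$ in the remaining graph $G':=G-V(P_A)-R$, which still satisfies $\delta(G')\ge (1/2-o(1))|V(G')|$, by a color-aware P\'osa rotation--extension argument. A rotation replaces the current endpoint edge by another edge $xy$ of $G'$; to preserve rainbowness we must pick an edge whose color lies outside the palette used so far on the path. Since the path has at most $n$ edges and $G$ is $\mu n$-bounded, color conflicts forbid at most $\mu n^2$ rotation candidates, while the standard P\'osa lemma produces $\Omega(n^2)$ candidates in total, so for $\mu$ sufficiently small a positive proportion remain and both rotation and extension still succeed. Once $P$ covers all but $o(n)$ vertices of $G'$, I would join its endpoints to those of $P_A$ through $R$ and absorb the leftover vertices by short rainbow bridges in $R$; any vertex still uncovered is then swallowed by $P_A$ using its absorbing property, closing up into a rainbow Hamilton cycle of $G$.

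The main obstacle is the color bookkeeping across stages: every step enlarges the set of forbidden colors, and we must guarantee that the remaining rainbow-admissible subgraph still has enough structure to run the next step. I would handle this by pre-committing disjoint color palettes to the absorber, the reservoir, and the main path in advance; the $\mu n$-bounded hypothesis is precisely what forces the degree drop induced by deleting one palette to be $o(n)$ per stage, so the minimum-degree condition needed for the next stage is preserved. Coupling the independent random choices of $P_A$, $R$, and the rotation sequence via a union bound then ties the estimates together. The most delicate quantitative point is balancing the three small parameters $\mu\ll \mu_A\ll \eta\ll 1$ so that the absorber is long enough to cover the eventual leftover but short enough that its reserved palette still blocks only $o(n^2)$ edges during the rotation phase.
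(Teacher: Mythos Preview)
The paper does not prove Theorem~\ref{bdHC}; it is quoted from Coulson and Perarnau~\cite{bounded}.  Nevertheless the architecture of their proof is visible through the lemmas the paper imports: one first classifies $G$ via Lemma~\ref{class} as a robust $(\nu,\tau)$-expander, $\gamma$-close to $2K_{n/2}$, or $\gamma$-close to $K_{n/2,n/2}$, and then in each of the two extremal cases one hand-picks a small set $Z$ of ``structural'' edges (two crossing edges in the two-clique case, a balancing matching inside $V_2$ in the biclique case), exhibits one Hamilton cycle through $Z$, and finally invokes the \emph{switching} Lemma~\ref{switch}, a local-search argument on the space of Hamilton cycles.  The robust expander case is Theorem~\ref{bdexpander}.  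None of this is an absorbing argument; your route is genuinely different.

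Your plan is plausible for robust expanders---indeed this is essentially how one proves Theorem~\ref{bdexpander}---but it breaks in the two extremal regimes, and this is not merely a missing detail.  The claim that ``the standard P\'osa lemma produces $\Omega(n^{2})$ candidates'' is false there.  If $G$ is $\gamma$-close to $2K_{n/2}$, the crossing edges between the two near-cliques number only $\Theta(n)$, and an almost-spanning path must use at least one of them; yet the palette of a path of length $\Theta(n)$ forbids up to $\mu n^{2}\gg n$ edges, so every crossing edge can be colour-blocked simultaneously.  Rotations do not help, since rotating inside one clique keeps the endpoint inside that clique.  Similarly, if $G$ is $\gamma$-close to $K_{n/2,n/2}$ with $|V_1|<|V_2|$, all endpoints reachable by rotation from a maximal path lie on one side, and closing or extending requires one of the $O(n)$ edges inside $V_2$---again swamped by $\mu n^{2}$ forbidden edges.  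Your random reservoir and pre-committed palettes cannot rescue this, because the scarce structural edges are not typical and need not survive random sampling or palette restriction.  This is precisely why Coulson--Perarnau isolate these edges as the set $Z$ in Lemma~\ref{switch}, delete their colour classes so that their colours become unique, and run the switching argument only on the dense bipartite parts where $\Omega(n^{2})$ admissible moves genuinely exist (Lemmas~\ref{switchtwoclique} and~\ref{switchbi}).  To make your approach go through you would have to detect the extremal structure first and treat it separately, at which point you have reproduced the case split of Lemma~\ref{class} anyway.
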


A key ingredient of the proof of Theorem \ref{bdHC} is the following switching Lemma (\Cref{switch}), which provides a sufficient condition for the existence of rainbow Hamilton cycles in $\mu n$-bounded graphs. To state Lemma \ref{switch}, we need the following notation. An \textit{oriented cycle} $\mathop{C}\limits^{\rightharpoonup}$ is a cycle with an orientation such that each vertex has an out-neighbor. We use $C$ to denote the underlying cycle of $\mathop{C}\limits^{\rightharpoonup}$. Given an oriented cycle $\mathop{C}\limits^{\rightharpoonup}$ and a vertex $v\in V(\mathop{C}\limits^{\rightharpoonup})$, the out-neighbor and the in-neighbor of $v$ are denoted by $v^+$ and $v^-$ respectively. For any edge $e=vv^+\in E(C)$ and any two vertices $u,w\in V(C)$ such that $e'=uw\notin E(C)$ and $v$ is in the oriented path from $w$ to $u$ induced by $\mathop{C}\limits^{\rightharpoonup}$, let $\mathop{C_1}\limits^{\rightharpoonup}=\mathcal{S}_1(\mathop{C}\limits^{\rightharpoonup},e,e')$ and $\mathop{C_2}\limits^{\rightharpoonup}=\mathcal{S}_2(\mathop{C}\limits^{\rightharpoonup},e,e')$ be the oriented cycles with $w=u^+$ such that the underlying cycles are, respectively,
$$C_1=\left(C-\{e,uu^+,w^-w\}\right)+\{e',vu^+,w^-v^+\};$$
$$C_2=\left(C-\{e,uu^+,w^-w\}\right)+\{e',vw^-,u^+v^+\}.$$
\noindent We call each of the above operations a \textit{switching}. Given a graph $G$, a switching is said to be \textit{admissible} if both of the underlying cycles are subgraphs of $G$. 

\begin{lemma}[Theorem 3.5, \cite{bounded}]\label{switch}
Let $n\in \mathbb{N}$, and suppose $1/n\ll \mu\ll\alpha\ll\beta\leq 1$. Let $G$ be a $\mu n$-bounded $n$-vertex graph, and let $Z\subseteq E(G)$ with $|Z|\leq\alpha n$ satisfying that each color in $Z$ is unique in $E(G)$. Suppose that $G$ has at least one Hamilton cycle containing $Z$. If for every oriented Hamilton cycle $\mathop{\mathcal{H}}\limits^{\rightharpoonup}$ of $G$ with $Z\subseteq E(\mathcal{H})$ and every edge $e\in E(\mathcal{H})\setminus Z$, there are at least $\beta n^2$ admissible switchings $\mathcal{S}_i(\mathop{\mathcal{H}}\limits^{\rightharpoonup};e,e')$ for some $e'\in E(G)\setminus E(\mathcal{H})$ and $i\in \{1,2\}$, then $G$ has a rainbow Hamilton cycle containing $Z$.
\end{lemma}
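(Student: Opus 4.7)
The plan is to run an iterative switching argument. Starting from any Hamilton cycle $\mathcal{H}_0$ of $G$ containing $Z$ (which exists by hypothesis), I apply admissible switchings that strictly decrease a suitable ``badness'' potential until a rainbow Hamilton cycle is reached, while preserving $Z$ throughout.

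For an oriented Hamilton cycle $\mathcal{H}$ with $Z\subseteq E(\mathcal{H})$, declare an edge $e\in E(\mathcal{H})\setminus Z$ \emph{bad} if its color is shared with some other edge of $\mathcal{H}$, and let $b(\mathcal{H})$ count the bad edges. Since colors on $Z$-edges are unique in $E(G)$, no $Z$-edge is bad, so $b(\mathcal{H})=0$ is equivalent to $\mathcal{H}$ being rainbow. The lemma then reduces to the one-step claim: if $b(\mathcal{H})\geq 1$, some admissible switching on a bad edge yields $\mathcal{H}'$ with $Z\subseteq E(\mathcal{H}')$ and $b(\mathcal{H}')<b(\mathcal{H})$. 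Iterating at most $n$ times produces the desired rainbow Hamilton cycle.

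Fix a bad edge $e=vv^+$; by hypothesis there are at least $\beta n^2$ admissible switchings $\mathcal{S}_i(\mathop{\mathcal{H}}\limits^{\rightharpoonup};e,e')$ parameterized by $(u,w,i)$. Call such a switching \emph{good} if (i) neither of the other removed edges $uu^+$ and $w^-w$ lies in $Z$, so that $Z\subseteq E(\mathcal{H}')$, and (ii) each of the three newly added edges carries a color not shared with any other edge of the resulting cycle $\mathcal{H}'$. A good switching strictly decreases $b$: removing the bad edge $e$ lowers $b$ by at least one (by two if $c(e)$ had multiplicity exactly two in $\mathcal{H}$); removing $uu^+$ and $w^-w$ can only decrease color multiplicities and hence cannot create new bad edges; and the three new edges are non-bad by (ii). Hence $b(\mathcal{H}')\leq b(\mathcal{H})-1$.

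It remains to bound the number of non-good switchings strictly below $\beta n^2$. Switchings violating (i) number at most $2|Z|\cdot n\leq 2\alpha n^2$ since $|Z|\leq\alpha n$ and the other parameter is at most $n$. Switchings where the new chord $e'=uw$ has $c(e')\in\mathcal{C}_\mathcal{H}$ number at most $|\mathcal{C}_\mathcal{H}|\cdot\mu n\leq\mu n^2$ by $\mu n$-boundedness, and switchings where two of the three new edges share a color are similarly $O(\mu n^2)$. The main obstacle is controlling the switchings in which one of the two ``repair'' edges $vu^+$ or $w^-v^+$ carries a color already present on a surviving edge of $\mathcal{H}$: the color $c(vu^+)$ depends only on $u$ (with $v$ fixed), and naively the set $\{u:c(vu^+)\in\mathcal{C}_\mathcal{H}\}$ can have size $\Theta(n)$, so coupled with the free choice of $w$ this gives a trivial bound of $\Theta(n^2)$, which is too large. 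Following the scheme of Coulson and Perarnau, the remedy is to exploit both switching types $i\in\{1,2\}$ together with the $\mu n$-boundedness more carefully: one partitions the ``bad'' values of $u$ according to which color class of $G$ the edge $vu^+$ belongs to, and uses that each color class has at most $\mu n$ edges to show that the genuinely problematic configurations (where neither type $i=1$ nor $i=2$ can save us) contribute at most $O(\sqrt{\mu}\,n^2)$ to the total. Summing all contributions yields $O((\alpha+\sqrt{\mu})\,n^2)\ll\beta n^2$ under $1/n\ll\mu\ll\alpha\ll\beta$, so a good switching exists and the induction closes.
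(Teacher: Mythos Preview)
First, a framing point: the paper does not prove this lemma at all --- it is quoted verbatim as Theorem~3.5 of Coulson--Perarnau \cite{bounded} and used as a black box. So there is no ``paper's own proof'' to compare against; your proposal is an attempt to reprove an imported result.

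Your outline is the natural greedy one, and you correctly isolate the only real difficulty: for a fixed bad edge $e=vv^+$, the repair edges $vu^+$, $w^-v^+$ (type~1) or $vw^-$, $u^+v^+$ (type~2) are determined once $u$ (resp.\ $w$) is chosen, and the set of $u$ for which $c(vu^+)\in\mathcal{C}_{\mathcal H}$ can genuinely have size $\Theta(n)$. The $\mu n$-bound gives nothing here, because $\mathcal{C}_{\mathcal H}$ may contain up to $n$ distinct colors and the edges from $v$ number at most $n$ anyway. Paired with $\Theta(n)$ choices for the other endpoint, this is $\Theta(n^2)$ potentially bad switchings, exactly the obstacle you name.

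The problem is that your resolution of this obstacle is not an argument but an assertion. You write that one ``partitions the bad values of $u$ according to which color class the edge $vu^+$ belongs to'' and that exploiting both types $i\in\{1,2\}$ yields $O(\sqrt{\mu}\,n^2)$ bad configurations --- but you give no mechanism for why having both types helps, no explanation of where $\sqrt{\mu}$ comes from, and no reason the set ``neither type works'' should be small. Partitioning by color class gives, for each of the $\le n$ colors in $\mathcal{C}_{\mathcal H}$, at most $\mu n$ bad $u$'s; summing gives $\mu n^2$, which exceeds $n$ and hence yields no saving. This is the entire content of the lemma, and it is left as a pointer to ``the scheme of Coulson and Perarnau''.

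In fact a one-step greedy argument of this form appears to face a genuine obstruction: nothing prevents every edge from $v$ into $V(\mathcal H)$ from carrying a color already on $\mathcal H$. The actual proof in \cite{bounded} does not proceed by finding a single improving switching; it uses a global switching-method counting argument, comparing the \emph{numbers} of Hamilton cycles (containing $Z$) with various defect profiles, and the $\mu n$-boundedness enters through aggregate forward/backward degree ratios in the auxiliary bipartite switching graph rather than through a local bound at a fixed $e$. That is a substantively different (and more delicate) argument than the one you sketch.
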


The next lemma from \cite{factor} is another important tool used to prove Theorem \ref{bdHC}, which provides a classification of graphs of high minimum degree.
Let $G$ be an $n$-vertex graph. For $0<\nu<1$ and $X\subseteq V(G)$, the \emph{$\nu$-robust neighborhood} of $X$ in $G$ is defined as $$RN_{\nu}(X):=\left\{v\in V(G) : |N_G(v)\cap X|\geq \nu n\right\}.$$ 
\noindent Let $0<\nu\leq \tau<1$. The graph $G$ is said to be a \emph{robust $(\nu,\tau)$-expander} if for every set $X\subseteq V(G)$ with $\tau n\leq |X|\leq (1-\tau)n$, it holds that $|RN_{\nu}(X)|\geq |X|+\nu n$.
Let $0<\gamma<1$. The graph $G$ is said to be \textit{$\gamma$-close} to the union of two disjoint copies of $K_{n/2}$, denoted by $2K_{n/2}$, if $\delta(G)\geq n/2$ and there exists $A\subseteq V(G)$ with $|A|=\lfloor\frac{n}{2}\rfloor$ such that there are at most $\gamma n^2$ edges between $A$ and $V(G)\setminus A$. And the graph $G$ is said to be \textit{$\gamma$-close} to $K_{n/2,n/2}$ if $\delta(G)\geq n/2$ and there exists $A\subseteq V(G)$ with $|A|=\lfloor\frac{n}{2}\rfloor$ such that there are at most $\gamma n^2$ edges with two ends in $A$.

\begin{lemma}[Lemma 1.3.2, \cite{factor}]\label{class}
Let $n\in \mathbb{N}$, and suppose that $0<1/n\ll \nu \ll \tau,\gamma<1$. Let $G$ be an $n$-vertex graph with $\delta(G)\geq n/2$. Then $G$ satisfies one of the following properties:
\begin{itemize}[leftmargin=2em]
  \item $G$ is $\gamma$-close to $2K_{n/2}$;
  \item $G$ is $\gamma$-close to $K_{n/2,n/2}$;
  \item $G$ is a robust $(\nu,\tau)$-expander.
\end{itemize}
\end{lemma}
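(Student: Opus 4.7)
My plan is to prove \Cref{class} by contrapositive: assume $G$ is not a robust $(\nu,\tau)$-expander, and derive that $G$ must be $\gamma$-close to either $2K_{n/2}$ or $K_{n/2,n/2}$. By the definition of non-expansion, there exists a set $X\subseteq V(G)$ with $\tau n\leq |X|\leq (1-\tau)n$ such that $|RN_{\nu}(X)|<|X|+\nu n$. Writing $Y:=V(G)\setminus RN_{\nu}(X)$, every $y\in Y$ satisfies $d_G(y,X)<\nu n$, so from $\delta(G)\geq n/2$ we obtain $d_G(y,V(G)\setminus X)>n/2-\nu n$; moreover $|Y|>n-|X|-\nu n$.

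The next step is to pin $|X|$ to within $O(\nu n)$ of $n/2$. Since every vertex of $Y$ needs more than $n/2-\nu n$ neighbors outside $X$, we get $|V(G)\setminus X|> n/2-\nu n$, that is $|X|<n/2+\nu n$. By choosing $X$, up to complementation, to be the smaller side of the non-expansion witness (the definition is symmetric because $|RN_{\nu}(X)|+|Y|=n$), the same argument then gives $|X|>n/2-O(\nu n)$. So $|X|=n/2\pm O(\nu n)$, and a cleanup by adding or removing $O(\nu n)$ vertices produces a set $A$ with $|A|=\lfloor n/2\rfloor$ that still captures the bulk of the structure.

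The final step is a case analysis based on where the vertices of $Y$ concentrate. If a large fraction of $Y$ lies in $X$, then many vertices of $X$ send more than $n/2-\nu n$ edges to $V(G)\setminus X$, forcing a dense bipartite-like graph between $X$ and $V(G)\setminus X$ and hence $K_{n/2,n/2}$-closeness. If instead most of $Y$ lies in $V(G)\setminus X$, then many vertices of $V(G)\setminus X$ have $>n/2-\nu n$ neighbors inside $V(G)\setminus X$; combined with a symmetric statement for $X$ (already of size close to $n/2$, so by the minimum-degree lower bound most of its degree must stay internal), this yields a cross-partition edge count of $O(\nu)n^2\leq\gamma n^2$, giving $2K_{n/2}$-closeness. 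After transferring from $X$ to the cleaned set $A$ of size exactly $\lfloor n/2\rfloor$, the excess in the relevant edge count grows by at most $O(\nu n)\cdot n=O(\nu)n^2$, which is negligible since $\nu\ll\gamma$.

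The main obstacle I expect is the bookkeeping in this last step: passing from the rough bound $|X|=n/2\pm O(\nu n)$ together with a cross-edge (or internal-edge) count of $O(\nu)n^2$ to an \emph{exact} bipartition with one side of size $\lfloor n/2\rfloor$ while preserving an edge bound of the form $\gamma n^2$. The vertex swaps used to adjust the size must be chosen so as not to inflate the counted quantity, and it is precisely the hierarchy $\nu\ll\tau,\gamma$ that makes these local adjustments affordable. This kind of cleanup is standard in the Dirac-type structural dichotomies that underpin \cite{factor}, and I would invoke it essentially verbatim.
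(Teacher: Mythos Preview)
The paper does not supply its own proof of \Cref{class}; it is quoted verbatim as Lemma~1.3.2 of \cite{factor} and used as a black box. There is therefore no in-paper argument to compare your proposal against.

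That said, your sketch is the standard contrapositive route to this trichotomy and is essentially sound. One point to tighten: the claim that ``the definition is symmetric because $|RN_\nu(X)|+|Y|=n$'' does not by itself give the lower bound $|X|>n/2-O(\nu n)$; robust expansion of $X$ and of $V(G)\setminus RN_\nu(X)$ are not interchangeable in the naive way. The clean way to pin $|X|$ near $n/2$ from below is to double count edges incident to $X$: since $\sum_{x\in X}d_G(x)\ge |X|\,n/2$ while all but $<\nu n|Y|$ of these edge-ends land in $RN_\nu(X)$ (a set of size $<|X|+\nu n$), one gets $|X|(n/2-|X|-\nu n)\le \nu n^2$, and combined with $|X|\ge \tau n$ and $\nu\ll\tau$ this forces $|X|\ge n/2-O(\sqrt{\nu}\,)\,n$. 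With that correction the rest of your plan (splitting on whether $Y$ sits mostly inside or mostly outside $X$, then adjusting to a set of size exactly $\lfloor n/2\rfloor$ at cost $O(\sqrt{\nu}\,)n^2\le \gamma n^2$) goes through as in \cite{factor}.
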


\subsection{Strongly edge-colored graphs are almost $\mu n$-bounded}

The following so-called induced matching theorem was first proved by Ruzsa and Szemerédi \cite{IM} as a critical equivalent result of the famous $(6,3)$-Theorem by applying the Szemer\'edi Regularity Lemma, and can be used to derive a new proof of Roth's Theorem on arithmetic progressions. 

\begin{theorem}[Induced Matching Theorem, \cite{IM}]\label{IMT}
For any $\eta>0$, there exists $n_0\in \mathbb{N}$ such that if $G$ is a union of $n$ induced matchings and is on $n\geq n_0$ vertices, then $|E(G)|\leq \eta n^2$.
\end{theorem}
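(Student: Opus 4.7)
The plan is to reduce the statement to the Ruzsa--Szemer\'edi ``every edge in a unique triangle'' setup and then invoke the triangle-removal lemma. Write the $n$ induced matchings as $M_1,\ldots,M_n$, and introduce a tripartite auxiliary graph $H$ on $V_1\cup V_2\cup V_3$, where $V_1,V_2$ are two disjoint copies of $V(G)$ and $V_3=\{c_1,\ldots,c_n\}$ is a new set of $n$ ``color vertices''. For each edge $uv\in M_i$, add to $H$ the three edges $u^1v^2$, $u^1c_i$, $v^2c_i$, which place in $H$ exactly one triangle $\{u^1,v^2,c_i\}$ per edge of $G$; so $H$ has $3n$ vertices, $3|E(G)|$ edges, and $|E(G)|$ triangles.

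The first substantive step is to check that \emph{every} edge of $H$ lies in \emph{exactly one} triangle, and this is precisely where the \emph{induced} (not merely proper) structure is indispensable. For an edge $u^1v^2$ a triangle $\{u^1,v^2,c_i\}$ exists iff both $u$ and $v$ are incident to an edge of $M_i$; if $i\neq c(uv)$ we can find $u',v''$ with $uu',vv''\in M_i$, all four vertices distinct, and then the induced condition on $M_i$ forbids the edge $uv\in E(G)$, contradiction. Edges $u^1c_i$ and $v^2c_i$ are handled by analogous arguments combining matching uniqueness at each endpoint with induced-matching exclusion. This is the main obstacle: without the induced property, phantom triangles would spoil the reduction.

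Because $H$ contains a packing of $|E(G)|$ pairwise edge-disjoint triangles, destroying all triangles requires deleting at least $|E(G)|$ edges of $H$. Assuming $|E(G)|>\eta n^2$ for contradiction and writing $N=|V(H)|=3n$, any triangle-free subgraph of $H$ omits at least $(\eta/9)N^2$ edges, so the triangle-removal lemma (a standard consequence of the Szemer\'edi Regularity Lemma) forces $H$ to contain at least $\beta(\eta/9)\cdot N^3=\Theta(n^3)$ triangles for some $\beta(\eta/9)>0$. But by construction $H$ has exactly $|E(G)|\le n^2/2$ triangles, which is far too few once $n$ is large enough. The resulting contradiction yields $|E(G)|\le \eta n^2$ and completes the proof.
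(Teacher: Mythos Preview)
The paper does not supply its own proof of Theorem~\ref{IMT}; it is quoted as a known result of Ruzsa and Szemer\'edi and only \emph{used} (in Corollary~\ref{almostbd}). So there is no in-paper argument to compare against. Your proposal is essentially the classical Ruzsa--Szemer\'edi reduction to the triangle-removal lemma, and it is correct.

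Two small points worth tightening. First, the hypothesis says $G$ is a \emph{union} of $n$ induced matchings, not a disjoint union; before building $H$ you should assign each edge of $G$ to a single $M_i$ containing it (a submatching of an induced matching is still induced), otherwise an edge $uv\in M_i\cap M_j$ would put $u^1v^2$ in two triangles. Second, writing ``for each edge $uv\in M_i$ add $u^1v^2,\,u^1c_i,\,v^2c_i$'' silently picks an orientation of each edge; your uniqueness argument for the triangle through $u^1v^2$ actually uses that $u^1c_j\in E(H)$ means $u$ is the \emph{first} endpoint of its $M_j$-edge and $v^2c_j$ means $v$ is the \emph{second}. This is harmless---any fixed orientation per edge works and your induced-matching contradiction goes through verbatim---but the ``iff both $u$ and $v$ are incident to an edge of $M_i$'' sentence is not literally the right characterization. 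With these cosmetic fixes, the edge-disjoint triangle packing of size $|E(G)|$ plus triangle removal gives exactly the contradiction you describe.
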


In our proof, a key observation is that the induced matching theorem implies that any strongly edge-colored graph is almost $\mu n$-bounded, which enables us to employ the tools and results from \cite{bounded} to prove Theorem \ref{HC} with some new ideas involved.

\begin{cor}\label{almostbd}
For any $0<\mu<1$, there exists $n_0\in \mathbb{N}$ such that for $n\geq n_0$, any strongly edge-colored $n$-vertex graph $G$ contains a $\mu n$-bounded subgraph $G'$ such that $d_{G'}(v)\geq d_G(v)-\mu^2n$ for every $v\in V(G)$. 
\end{cor}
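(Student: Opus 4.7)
The plan is to take $G'$ to be the subgraph of $G$ obtained by deleting every color class of size greater than $\mu n$. By construction $G'$ is automatically $\mu n$-bounded, so the task reduces to controlling, for each vertex $v$, the quantity
\[
\ell(v):=\#\bigl\{\text{colors } c : m_c>\mu n \text{ and the class } M_c \text{ meets } v\bigr\},
\]
where $m_c$ denotes the number of edges of color $c$ and $M_c$ denotes the corresponding color class. Because $G$ is strongly edge-colored, every color meets $v$ in at most one edge; hence $d_G(v)-d_{G'}(v)=\ell(v)$, and the corollary reduces to proving the uniform bound $\ell(v)\le \mu^2 n$ for every $v\in V(G)$.

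To establish this, I would apply \Cref{IMT} with parameter $\eta:=\mu^3/3$ to obtain a threshold $n_0$. Fix $v$, and for each large color $c$ incident to $v$ denote the unique edge of $M_c$ at $v$ by $vw_c$. Since $M_c$ is an induced matching in $G$, every endpoint of $M_c\setminus\{vw_c\}$ is non-adjacent to $v$, so
\[
H:=\bigcup_{c \text{ large at } v}\bigl(M_c\setminus\{vw_c\}\bigr)
\]
is a subgraph of $G[V(G)\setminus N_G[v]]$, and each $M_c\setminus\{vw_c\}$ is still an induced matching inside $H$ because $H\subseteq G$. Thus $H$ has at most $n$ vertices and is the union of at most $\ell(v)\le d_G(v)\le n$ induced matchings; after padding with isolated vertices and empty matchings, \Cref{IMT} gives $|E(H)|\le \eta n^2$. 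On the other hand $|E(H)|>\ell(v)(\mu n-1)$, so for $n$ large enough,
\[
\ell(v)<\frac{\eta n^2}{\mu n-1}\le \mu^2 n.
\]

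Setting $G':=G-B$, where $B$ is the union of all large color classes, then yields both required properties. The main obstacle is precisely this uniform bound on $\ell(v)$: a naive averaging argument via $|B|\le \eta n^2$ (apply the Induced Matching Theorem once to $B$, viewed as a union of $O(n/\mu)$ induced matchings) controls only the \emph{average} $B$-degree, which is too weak to bound $d_G(v)-d_{G'}(v)$ at every vertex. The trick above is to observe that the tails of all large color classes incident to a fixed $v$ sit inside the common set $V(G)\setminus N_G[v]$, so a single application of the Induced Matching Theorem within that region forces $\ell(v)$ down uniformly.
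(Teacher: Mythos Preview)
Your proof is correct, but it takes a detour that the paper avoids. You argue vertex by vertex, bounding $\ell(v)$ via a localized application of the Induced Matching Theorem to the tails of the large classes through $v$. The paper instead establishes one global fact: the total number of color classes of size at least $\mu n$ is less than $\mu^2 n$. Indeed, if there were $\lceil \mu^2 n\rceil$ such classes, their union $H$ would be a graph on at most $n$ (and at least $2\mu n>\mu^2 n$) vertices decomposed into $\lceil \mu^2 n\rceil$ induced matchings, so \Cref{IMT} with $\eta\ll\mu$ forces $|E(H)|\le \eta n^2<\mu^3 n^2$, while plainly $|E(H)|\ge \lceil\mu^2 n\rceil\cdot\mu n\ge\mu^3 n^2$. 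Once there are fewer than $\mu^2 n$ large classes in total, the pointwise bound $d_G(v)-d_{G'}(v)<\mu^2 n$ is immediate, since a proper coloring lets each class meet $v$ in at most one edge.

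Two side remarks on your write-up. First, your diagnosis of the ``naive'' obstacle is slightly off: the difficulty with applying \Cref{IMT} directly to the union $B$ of all large classes is not that the conclusion is only an average bound, but that a priori $B$ could decompose into as many as $\Theta(n/\mu)$ induced matchings, more than the theorem allows; the paper sidesteps this by grabbing only $\lceil\mu^2 n\rceil$ of them for the contradiction. Second, your restriction to $V(G)\setminus N_G[v]$ and the deletion of $vw_c$ from each $M_c$ are unnecessary: already $\bigcup_{c\text{ large at }v} M_c$ is a union of $\ell(v)\le n$ induced matchings on at most $n$ vertices, and the same application of \Cref{IMT} yields $\ell(v)\cdot\mu n\le \eta n^2$ directly.
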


\begin{proof}
Let $0<1/n\ll\eta\ll\mu<1$, and let $G$ be a strongly edge-colored $n$-vertex graph. Suppose that $E_1,E_2,\ldots, E_m$ are all the color classes of $G$ with $|E_1|\geq|E_2|\geq\cdots\geq|E_m|$. Since $G$ is a strongly edge-colored graph, each $E_i~(1\leq i\leq m)$ is an induced matching. Let $\ell=\max\{i : |E_i|\geq \mu n\}$. We claim that $\ell<\mu^2 n$. Otherwise, let $H=\bigcup_{i=1}^{\lceil\mu^2 n\rceil}E_i$. Then $|V(H)|\geq 2\mu n$ and Theorem \ref{IMT} implies that $|E(H)|\leq \eta |V(H)|^2\leq \eta n^2<\mu^3n^2$, which contradicts the fact that $|E(H)|=\sum_{i=1}^{\lceil\mu^2 n\rceil}|E_i|\geq \mu^3n^2 $. Therefore $G'=G-\left(\bigcup_{i=1}^{\ell}E_i\right)$ is as desired.  
\end{proof}

\section{Proof of Theorem \ref{HC}}

In this section, we prove Theorem~\ref{HC} by considering, in turn, robust expanders  (Lemma \ref{expander}), graphs close to $2K_{n/2}$ (Lemma \ref{twoclique}), and graphs close to $K_{n/2,n/2}$ (Lemma \ref{bip}). Theorem \ref{HC} then follows from Lemma \ref{class}.

\subsection{Robust expanders}
In this part, we prove that a strongly edge-colored robust expander of positive minimum degree contains a rainbow Hamilton cycle.
\begin{lemma}\label{expander}
Let $n\in \mathbb{N}$, and suppose $1/n\ll \nu\ll \tau\ll\gamma<1$. If $G$ is a strongly edge-colored $n$-vertex graph with $\delta(G)\geq\gamma n$ that is a robust $(\nu,\tau)$-expander, then $G$ contains a rainbow Hamilton cycle.
\end{lemma}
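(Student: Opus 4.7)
The plan is to combine Corollary~\ref{almostbd} with the switching lemma (Lemma~\ref{switch}), applied with $Z=\emptyset$. I would first pick a small $\mu$ with $\mu\ll\nu$ and apply Corollary~\ref{almostbd} to obtain a $\mu n$-bounded spanning subgraph $G'\subseteq G$ with $d_{G'}(v)\ge d_G(v)-\mu^2 n\ge(\gamma-\mu^2)n$ for every $v$. Since $|N_{G'}(v)\cap X|\ge|N_G(v)\cap X|-\mu^2 n$ for every $X$, the robust expansion is almost preserved: one has $RN^G_{\nu}(X)\subseteq RN^{G'}_{\nu-\mu^2}(X)$, so $G'$ is still a robust $(\nu',\tau)$-expander with $\nu':=\nu-\mu^2\ge\nu/2$. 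Together with the minimum degree $\delta(G')\ge n/2-\mu^2 n$ that arises when Lemma~\ref{expander} is invoked via Lemma~\ref{class}, standard Hamiltonicity results for robust expanders ensure that $G'$ contains at least one Hamilton cycle, supplying the starting hypothesis of Lemma~\ref{switch}.

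The heart of the argument is to verify the switching hypothesis: for every oriented Hamilton cycle $\mathop{\mathcal{H}}\limits^{\rightharpoonup}$ of $G'$ and every $e=vv^+\in E(\mathcal{H})$, there should be at least $\beta n^2$ admissible switchings. Focusing on $\mathcal{S}_1$-switchings, an admissible choice is an edge $e'=uw\in E(G')\setminus E(\mathcal{H})$ with $u$ preceding $w$ on the oriented arc $v^+\to\cdots\to v$, such that both $vu^+\in E(G')$ and $w^-v^+\in E(G')$. Setting $A:=\{u:u^+\in N_{G'}(v)\}$ and $B:=\{w:w^-\in N_{G'}(v^+)\}$, one has $|A|,|B|\ge(\gamma-\mu^2)n$, and the counting reduces to lower-bounding $e_{G'}(A,B)$. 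Applying robust expansion of $G'$ to $A$ gives $|RN^{G'}_{\nu'}(A)|\ge|A|+\nu' n$, whence $|RN^{G'}_{\nu'}(A)\cap B|\ge|A|+|B|-n+\nu' n\ge(\nu'/2)n$ in the regime $\gamma\ge 1/2$; each such vertex of $B$ contributes $\ge\nu' n$ neighbors in $A$, yielding $e_{G'}(A,B)\ge(\nu')^2 n^2/2$. Subtracting the $n$ edges of $\mathcal{H}$ and accounting for the positional constraint on $(u,w)$ still leaves at least $\beta n^2$ admissible switchings for a suitable $\beta>0$, and Lemma~\ref{switch} then produces a rainbow Hamilton cycle in $G'\subseteq G$.

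The main obstacle will be precisely this switching count: one needs \emph{two} neighborhood conditions simultaneously, namely $u^+\in N_{G'}(v)$ and $w^-\in N_{G'}(v^+)$, on the two endpoints of the new edge $e'=uw$, rather than a one-sided expansion statement. Robust expansion of $G'$ supplies the needed edge bound only because $\delta(G')\ge n/2-o(n)$ forces $|A|+|B|>n$, guaranteeing that the robust neighborhood of $A$ meets $B$ in $\Omega(n)$ vertices; without a degree threshold of this form the pair $(A,B)$ could in principle be almost disjoint and carry very few edges. Once this edge lower bound is secured the remainder is bookkeeping.
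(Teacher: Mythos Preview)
Your first move---pass to a $\mu n$-bounded spanning subgraph $G'$ via Corollary~\ref{almostbd} and observe that $G'$ remains a robust $(\nu-\mu^2,\tau)$-expander with $\delta(G')\ge(\gamma-\mu^2)n$---is exactly what the paper does. The divergence is what comes next: the paper simply invokes Theorem~\ref{bdexpander} (Theorem~7.1 of \cite{bounded}) as a black box, which already asserts that any $\mu n$-bounded robust $(\nu,\tau)$-expander with $\delta\ge\gamma n$ contains a rainbow Hamilton cycle. That is the entire proof; there is no need to open up the switching machinery here.

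Your alternative route---re-deriving Theorem~\ref{bdexpander} directly from Lemma~\ref{switch}---has a genuine gap, and you have put your finger on it yourself. The lemma is stated for arbitrary $\gamma$ in the hierarchy $\tau\ll\gamma<1$, but your switching count relies on $|A|+|B|>n$ to force $RN_{\nu'}(A)$ to intersect $B$; this only holds when $\gamma\ge 1/2-o(1)$. Appealing to the fact that Lemma~\ref{expander} is \emph{applied} via Lemma~\ref{class} with $\delta\ge n/2$ does not prove the lemma as stated. For small $\gamma$ one would need a different mechanism (iterated robust expansion, or the regularity-based argument actually used in \cite{bounded} for Theorem~7.1) to produce enough edges between $A$ and $B$.

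Even granting $\gamma\ge 1/2$, the sentence ``accounting for the positional constraint on $(u,w)$ still leaves at least $\beta n^2$ admissible switchings'' is not justified. An edge counted in $e_{G'}(A,B)$ yields an admissible $\mathcal{S}_1$-switching only when its $A$-endpoint precedes its $B$-endpoint along the arc from $v^+$ to $v$; nothing in the robust-expansion bound controls this ordering, and the $\mathcal{S}_2$-switching requires the different conditions $u^+\in N(v^+)$, $w^-\in N(v)$ rather than the complementary ordering of the same pair. So the lower bound on $e_{G'}(A,B)$ does not by itself give $\beta n^2$ admissible switchings. This is fixable with additional bookkeeping, but as written it is a gap---and in any case the paper avoids the entire issue by citing Theorem~\ref{bdexpander}.
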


To prove Lemma \ref{expander}, we use the following theorem from \cite{bounded}.

\begin{theorem}[Theorem 7.1, \cite{bounded}]\label{bdexpander}
Let $n\in \mathbb{N}$, and suppose $1/n\ll\mu\ll \nu\ll \tau\ll\gamma<1$. If $G$ is a $\mu n$-bounded $n$-vertex graph with $\delta(G)\geq\gamma n$ that is a robust $(\nu,\tau)$-expander, then $G$ contains a rainbow Hamilton cycle.
\end{theorem}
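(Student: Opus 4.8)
The plan is to deduce the theorem from the switching lemma (Lemma~\ref{switch}), applied with $Z=\emptyset$. After this reduction it remains to verify two things: that $G$ has at least one Hamilton cycle, and that there is a constant $\beta=\beta(\nu,\tau,\gamma)>0$ such that for every oriented Hamilton cycle $\mathop{\mathcal{H}}\limits^{\rightharpoonup}$ of $G$ and every edge $e\in E(\mathcal{H})$ there are at least $\beta n^2$ admissible switchings $\mathcal{S}_i(\mathop{\mathcal{H}}\limits^{\rightharpoonup};e,e')$. Having fixed such a $\beta$, one picks $\alpha\ll\beta$ and uses that $\mu$ was taken small enough that $\mu\ll\alpha$, so that Lemma~\ref{switch} then yields a rainbow Hamilton cycle of $G$. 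The first point is standard: for $n$ sufficiently large, a robust $(\nu,\tau)$-expander with $\delta(G)\ge\gamma n\ge\tau n$ is Hamiltonian, since robust expanders of linear minimum degree are Hamiltonian by a theorem of K\"uhn and Osthus.

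The substance of the argument is the lower bound on the number of admissible switchings. Fix $\mathop{\mathcal{H}}\limits^{\rightharpoonup}$ and $e=vv^+\in E(\mathcal{H})$, and write $N=N_G(v)$ and $N'=N_G(v^+)$, so $|N|,|N'|\ge\gamma n$. Unwinding the definitions, the switching $\mathcal{S}_1(\mathop{\mathcal{H}}\limits^{\rightharpoonup};e,uw)$ is admissible exactly when $uw\in E(G)\setminus E(\mathcal{H})$, $u^+\in N$, $w^-\in N'$, and $v$ lies on the oriented subpath of $\mathop{\mathcal{H}}\limits^{\rightharpoonup}$ from $w$ to $u$, while $\mathcal{S}_2(\mathop{\mathcal{H}}\limits^{\rightharpoonup};e,uw)$ is admissible exactly when $uw\in E(G)\setminus E(\mathcal{H})$, $w^-\in N$, $u^+\in N'$, and the same positional condition holds. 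Since the successor map $x\mapsto x^+$ along $\mathcal{H}$ is a permutation of $V(G)$, the sets $S=\{x:x^+\in N\}$ and $T=\{x:x^-\in N'\}$ (and the analogues $S',T'$ obtained by swapping the roles of $N$ and $N'$) each have size at least $\gamma n$. Thus the number of admissible $\mathcal{S}_1$-switchings equals the number of edges of $G$ between $S$ and $T$ that sit in the admissible cyclic position, up to the $O(n)$ pairs $\{u,w\}$ that lie on $\mathcal{H}$ or are otherwise degenerate, and similarly for $\mathcal{S}_2$ with $S',T'$; so it suffices to exhibit $\beta n^2$ such edges for one of the two switching types.

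To produce these edges I would use the robust $(\nu,\tau)$-expansion of $G$: for any linear-sized $X$, the robust neighborhood $RN_{\nu}(X)$ has size at least $|X|+\nu n$ when $\tau n\le|X|\le(1-\tau)n$ (the remaining ranges being handled directly from $\delta(G)\ge\gamma n$), and each vertex of $RN_{\nu}(X)$ sends at least $\nu n$ edges into $X$; taking $X$ to be a suitable one of $S,T,S',T'$ and intersecting with the partner set yields $\Omega(n^2)$ edges of $G$ between the two sets, of which one then extracts a constant proportion lying in a cyclically admissible position, after possibly passing to $\mathcal{S}_2$. The main obstacle is to make this counting \emph{uniform over all oriented Hamilton cycles} $\mathop{\mathcal{H}}\limits^{\rightharpoonup}$ and all edges $e$: for a cycle on which $S$ and $T$ are positioned adversarially, both the intersection step (when $\gamma$ is small, the crude bound $|RN_{\nu}(S)|+|T|-n$ can be vacuous) and the positional constraint (which could in principle annihilate all $\mathcal{S}_1$-edges if $S$ and $T$ occupy disjoint arcs in the wrong cyclic order) require a more delicate argument --- for instance, averaging the counts of $\mathcal{S}_1$- and $\mathcal{S}_2$-switchings, exploiting the interaction between $N_G(v)$, $N_G(v^+)$ and the cyclic order beyond mere cardinalities, and allowing the pivot edge $e$ to range over a short segment of $\mathcal{H}$ or invoking a rotation-type rerouting. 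Once the uniform bound $\beta n^2$ on admissible switchings is established, Lemma~\ref{switch} completes the proof.
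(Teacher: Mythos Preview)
The paper does not prove this theorem: it is quoted as Theorem~7.1 of Coulson--Perarnau~\cite{bounded} and used as a black box in the proof of Lemma~\ref{expander}. There is therefore no proof in the present paper to compare your proposal against.

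That said, your outline does mirror the strategy of the original reference (reduce to Lemma~\ref{switch} with $Z=\emptyset$, obtain a single Hamilton cycle from the K\"uhn--Osthus theorem on robust expanders, then establish a uniform $\beta n^2$ lower bound on admissible switchings). But your proposal is not a proof: you explicitly flag the uniform switching count as ``the main obstacle'' requiring ``a more delicate argument,'' and then do not supply that argument. Since this count is precisely where all the work lies---and your own discussion shows that both the intersection bound $|RN_\nu(S)|+|T|-n$ and the cyclic positional constraint can be vacuous under the crude estimates you write down---what you have is a reduction to the hard part, not a proof. The suggestions you list (averaging $\mathcal{S}_1$ and $\mathcal{S}_2$, letting the pivot range, rotation-type rerouting) are plausible heuristics but none is carried out; closing the gap requires the actual robust-expansion edge-counting argument from~\cite{bounded}.
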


\begin{proof}[Proof of Lemma \ref{expander}]
Set $1/n\ll\mu\ll \nu$. By Corollary \ref{almostbd}, there is a $\mu n$-bounded subgraph $G'$ of $G$ such that $d_{G'}(v)\geq d_G(v)-\mu^2n$ for every $v\in V(G)$. In particular, $G'$ is a spanning subgraph of $G$ with $\delta(G')\geq(\gamma-\mu^2)n>0$. Since $G$ is a robust $(\nu,\tau)$-expander, we know that $G'$ is a robust $(\nu-\mu^2,\tau)$-expander. Then by Theorem \ref{bdexpander}, $G'$ contains a rainbow Hamilton cycle.
\end{proof}

\subsection{Graphs close to $2K_{n/2}$}
This part is devoted to proving the following lemma, that is, Theorem \ref{HC} holds for graphs which are $\gamma$-close to $2K_{n/2}$.

\begin{lemma}\label{twoclique}
Let $n\in \mathbb{N}$, and suppose $1/n\ll\gamma\ll1$. If $G$ is a strongly edge-colored $n$-vertex graph with $\delta(G)\geq n/2$ that is $\gamma$-close to $2K_{n/2}$, then $G$ contains a rainbow Hamilton cycle.
\end{lemma}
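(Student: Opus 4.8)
The strategy is to analyze the structure imposed by being $\gamma$-close to $2K_{n/2}$ and then glue together suitable rainbow structures on the two nearly-complete parts. Let $A\subseteq V(G)$ with $|A|=\lfloor n/2\rfloor$ witness the closeness, and set $B=V(G)\setminus A$; there are at most $\gamma n^2$ edges between $A$ and $B$. Since $\delta(G)\geq n/2$, each vertex has nearly all of its neighbors inside its own part, so both $G[A]$ and $G[B]$ are very dense — all but at most $O(\sqrt{\gamma}\,n)$ vertices in each part have degree at least $(1/2 - O(\sqrt\gamma))n$ inside that part, and only $O(\sqrt\gamma\, n)$ vertices can have many neighbors across. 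Call a vertex \emph{exceptional} if it has at least $\gamma^{1/3} n$ neighbors in the other part, and let $A_0, B_0$ be the exceptional vertices in $A, B$ respectively, which are few. The plan is: first absorb the exceptional vertices and balance the parts using the (few) crossing edges, then find a rainbow Hamilton path in each of the two resulting near-complete, near-balanced parts with prescribed rainbow endpoints, and finally connect the two paths into one rainbow Hamilton cycle using two crossing edges of new colors.

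First I would handle the crossing/balancing step. Because $\delta(G)\geq n/2$, a counting argument (the "$\delta(G)\geq n/2$ forces a linking edge" trick) shows that for any two vertices in different parts with full in-part degree there is actually a crossing edge available, and more strongly there are many crossing edges with pairwise distinct colors (strong edge-coloring means every color class is an induced matching, so a set of $2k$ vertices spans at most — after deleting few large color classes via Corollary~\ref{almostbd} — essentially a $\mu n$-bounded graph, hence few repeated colors). I would first move the few exceptional vertices of $A_0\cup B_0$ into short rainbow paths hanging off the appropriate part, using their cross-neighbors, so that after this the remaining vertices of each part are genuinely near-complete and the two parts have nearly equal size; then select two crossing edges $e_1 = a_1 b_1$, $e_2 = a_2 b_2$ with distinct colors, distinct from everything used so far, with $a_1,a_2\in A$ and $b_1,b_2 \in B$ all non-exceptional and of full in-part degree. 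Since the number of "forbidden" colors is only $O(\gamma^{1/3} n)$, such a pair exists by a greedy choice.

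Next, in each part I need a rainbow Hamilton path with prescribed endpoints, avoiding a small set of already-used colors. Here I would invoke Theorem~\ref{bdHC} (or rather the switching Lemma~\ref{switch}) applied to $G[A]$ with the two pendant edges $a_1 a_1'$, $a_2 a_2'$ forced in: after deleting the at most $\mu^2 n$ largest color classes via Corollary~\ref{almostbd}, $G[A]$ restricted to its non-exceptional vertices is a $\mu n$-bounded graph on $\approx n/2$ vertices with minimum degree $\geq (1 - o(1))\cdot \tfrac12\cdot\tfrac{n}{2}$ — far above half its order — so it is a robust expander, and Theorem~\ref{bdexpander} / the switching lemma yields a rainbow Hamilton cycle, which I then open at a chosen edge to get a rainbow Hamilton path with the right endpoints; the same for $G[B]$. (The prescribed endpoints $b_1, a_1$ on one side and $b_2, a_2$ on the other are arranged by first absorbing them as pendant edges in $Z$ in Lemma~\ref{switch}.) Concatenating: path in $A$ from $a_1$ to $a_2$, edge $a_2 b_2$, path in $B$ from $b_2$ to $b_1$, edge $b_1 a_1$ — a rainbow Hamilton cycle of $G$, since all colors used (the two in-part rainbow paths plus the two distinct crossing colors $c(e_1), c(e_2)$) are pairwise distinct by construction.

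The main obstacle I anticipate is the balancing/exceptional-vertex step when $|A_0|\cup|B_0|$ is nonempty and the two parts are not perfectly balanced: we must route the exceptional vertices and fix the parity/size discrepancy using only the $O(\gamma n^2)$ crossing edges, while keeping everything rainbow and not spoiling the high-minimum-degree-in-part property that feeds the expander input. The delicate point is that an exceptional vertex could, in principle, have \emph{no} neighbor inside its own part of full degree, or a crossing edge we want could be forced to reuse a color; controlling this requires the strong-edge-coloring hypothesis (to bound color repetitions in the small crossing bipartite graph via the Induced Matching Theorem, Theorem~\ref{IMT}, exactly as in Corollary~\ref{almostbd}) together with a careful greedy ordering of which crossing edges to commit to first. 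Once the crossing edges and endpoints are fixed cleanly, the two in-part rainbow Hamilton path problems are routine applications of the machinery already quoted.
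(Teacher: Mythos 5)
Your plan differs from the paper's in a fundamental way, and the difference hides a genuine gap. You propose to find a rainbow Hamilton path inside $G[A]$ and a rainbow Hamilton path inside $G[B]$ (each via the bounded-color expander machinery), then concatenate them with two distinctly-colored crossing edges. The unaddressed problem is color \emph{coordination} between the two parts: a color class of the strong edge-coloring may well have one edge in $G[A]$ and another in $G[B]$ (these two edges can certainly form an induced matching), so the two separately constructed rainbow paths can repeat a color, and the concatenation is then not rainbow. You assert that "all colors used \dots\ are pairwise distinct by construction," but the construction you describe provides no such guarantee. Trying to patch this by deleting from $G[B]$ the roughly $n/2$ color classes used on the $A$-path is not obviously safe either: since a strong edge-coloring is proper, a single vertex of $B$ could in principle lose essentially all of its roughly $n/2$ in-part neighbors to these deletions, destroying the minimum-degree/expansion hypothesis you rely on.

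The paper sidesteps this entirely by applying the switching lemma (Lemma~\ref{switch}) once, to the whole graph: after passing to a $\mu n$-bounded subgraph via Corollary~\ref{almostbd}, it deletes all crossing edges except a chosen pair $Z=\{e_1,e_2\}$ (and the rest of the two color classes of $e_1,e_2$), then uses Lemmas~\ref{HCtwoclique} and \ref{switchtwoclique} to verify the hypotheses of Lemma~\ref{switch} on the whole superextremal two-clique. Because a single application of Lemma~\ref{switch} produces one rainbow Hamilton cycle of the whole graph, there is no coordination problem between the two sides. The paper also uses a specific trick to produce two crossing edges of distinct colors that your sketch lacks: pick $u,v\in V_1$ with $uv\in E(G)$ and crossing edges $vw,uw'$ with $w\neq w'$; then $\{vw,uw'\}$ is not an induced matching (because $uv$ is an edge), so by the strong-coloring hypothesis $vw$ and $uw'$ automatically have different colors. (The case analysis via the last clause of Lemma~\ref{supertwoclique} covers the degenerate situations.) Two further, smaller issues with your sketch: Theorem~\ref{bdexpander} and Lemma~\ref{switch} deliver rainbow Hamilton \emph{cycles}, not Hamilton \emph{paths} with prescribed endpoints, so "open at a chosen edge" does not by itself arrange the endpoints you need; and your ad hoc absorption of exceptional vertices and rebalancing is exactly the content of Lemma~\ref{supertwoclique}, which should be invoked rather than reproved.
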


Let $0\leq\varepsilon\leq1$. An $n$-vertex graph $G$ is an \emph{$\varepsilon$-superextremal two-clique} if there exists a partition $V(G)=V_1\cup V_2$ satisfying the following.
\stepcounter{propcounter}
\begin{enumerate}[label =\rm({\bfseries \Alph{propcounter}\arabic{enumi}})]
\item\label{A1} $\left||V_1|-|V_2|\right|\leq \varepsilon n$;
\item\label{A2} For $i\in\{1,2\}$, $d_G(v,V_i)\geq (1/4-\varepsilon)n$ for all vertices $v\in V_i$;
\item\label{A3} For $i\in\{1,2\}$, $d_G(v,V_i)\geq (1/2-\varepsilon)n$ for all but at most $\varepsilon n$ vertices $v\in V_i$. 
\end{enumerate}

The next lemma shows that any $n$-vertex graph which is close to $2K_{n/2}$ is also a superextremal two-clique. 

\begin{lemma}[Lemma 5.3, \cite{bounded}]\label{supertwoclique}
Let $n\in \mathbb{N}$, and suppose $1/n\ll\gamma\ll\varepsilon\ll 1$. Let $G$ be an $n$-vertex graph with $\delta(G)\geq n/2$ that is $\gamma$-close to $2K_{n/2}$. Then there is a partition $V(G)=V_1\cup V_2$ such that $G$ is an $\varepsilon$-superextremal two-clique with this partition. Moreover, if $|V_1|\leq|V_2|$, then either $\delta\left(G[V_1,V_2]\right)\geq 1$ or $d_G(v,V_2)\geq 2$ for every $v\in V_1$.
\end{lemma}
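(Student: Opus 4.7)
The plan is to obtain the desired partition $V_1\cup V_2$ by lightly perturbing the partition $A\cup\overline{A}$ supplied by the $\gamma$-closeness hypothesis. Since $e_G(A,\overline{A})\leq\gamma n^2$, a Markov-type estimate shows that the set
\[
C:=\{v\in V(G):v\text{ has more than }\sqrt{\gamma}\,n\text{ cross-neighbours}\}
\]
satisfies $|C|\leq 2\sqrt{\gamma}\,n$, and every vertex outside $C$ has at least $(1/2-\sqrt{\gamma})n$ neighbours on its original side. My first step would be to define
\[
V_1:=(A\setminus C)\cup\bigl\{v\in C:d_G(v,A)\geq d_G(v,\overline{A})\bigr\},\qquad V_2:=V(G)\setminus V_1,
\]
that is, keep every clean vertex where it was and place each dirty vertex on its majority side.

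The next step is a routine verification of \ref{A1}--\ref{A3}. For \ref{A1}, the imbalance $\bigl||V_1|-|V_2|\bigr|$ differs from $\bigl||A|-|\overline{A}|\bigr|\leq 1$ by at most $2|C|=O(\sqrt{\gamma}\,n)\ll\varepsilon n$. For \ref{A3}, the only vertices of $V_1$ that could violate the $(1/2-\varepsilon)n$ bound lie in $C$, and $|C|\leq\varepsilon n$; the same holds for $V_2$. For \ref{A2}, a brief case split suffices: if $v\in V_1\setminus C=A\setminus C$, then $d_G(v,V_1)\geq d_G(v,A)-|C|\geq(1/2-\sqrt{\gamma})n-2\sqrt{\gamma}\,n\geq(1/4-\varepsilon)n$; and if $v\in C\cap V_1$, then by construction $d_G(v,A)\geq d_G(v)/2\geq n/4$, so after subtracting $|C|$ we still have $d_G(v,V_1)\geq(1/4-\varepsilon)n$. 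The argument for $v\in V_2$ is symmetric.

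For the ``moreover'' clause, the plan is to argue by pure degree counting, without using any property of the specific partition beyond $\delta(G)\geq n/2$ and $|V_1|\leq|V_2|$. Suppose $\delta(G[V_1,V_2])=0$, witnessed by some $u$ with no cross-neighbour. If $u\in V_1$, then $d_G(u,V_1)\geq n/2$ forces $|V_1|\geq n/2+1>|V_2|$, a contradiction; hence $u\in V_2$ and $|V_2|\geq n/2+1$. If some $v\in V_1$ satisfied $d_G(v,V_2)\leq 1$, then $d_G(v,V_1)\geq n/2-1$ would force $|V_1|\geq n/2$, so $|V_1|+|V_2|\geq n+1$, contradicting $|V_1|+|V_2|=n$. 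Therefore every $v\in V_1$ has $d_G(v,V_2)\geq 2$, giving the required alternative.

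I do not expect a serious obstacle. The one point that calls for attention is the choice of threshold defining $C$: it must sit in the window $(\gamma,\varepsilon)$, i.e., large enough that $|C|$ is tiny and small enough that the $(1/2-\sqrt{\gamma})n$ bound still dominates in \ref{A2}--\ref{A3}; the value $\sqrt{\gamma}\,n$ above is one convenient choice, but anything comparable works. The ``moreover'' clause is the most conceptually delicate part, but it turns out to be a clean consequence of the minimum degree condition and the size ordering $|V_1|\leq|V_2|$, requiring no additional structural work on the partition.
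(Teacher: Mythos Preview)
Your proposal is correct, and it follows the standard redistribution argument for this type of extremal structure lemma. Note, however, that the present paper does not give its own proof of this statement: it is quoted verbatim as Lemma~5.3 of Coulson--Perarnau \cite{bounded} and used as a black box, so there is no in-paper proof to compare against. Your argument---bounding the set $C$ of ``dirty'' vertices by a Markov estimate, reassigning each $v\in C$ to its majority side, and then verifying \ref{A1}--\ref{A3} together with the ``moreover'' clause by degree counting---is exactly the approach one expects in the cited source and is entirely sound.
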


The following lemma shows that in a superextremal two-clique, we can always find a Hamilton cycle to include any two vertex disjoint edges between the two parts. 

\begin{lemma}[Lemma 5.5, \cite{bounded}]\label{HCtwoclique}
Let $G$ be an $\varepsilon$-superextremal two-clique with partition $V(G)=V_1\cup V_2$, and let $e_1, e_2$ be two vertex disjoint edges in $E_G(V_1,V_2)$. Then $G$ has a Hamilton cycle containing $e_1$ and $e_2$.
\end{lemma}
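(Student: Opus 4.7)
\medskip

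\noindent\textbf{Proof proposal.} Write $e_1=u_1v_1$ and $e_2=u_2v_2$ with $u_1,u_2\in V_1$ and $v_1,v_2\in V_2$. The plan is to construct a Hamilton path $P_1$ of $G[V_1]$ with endpoints $u_1,u_2$ and a Hamilton path $P_2$ of $G[V_2]$ with endpoints $v_1,v_2$; concatenating $P_1,e_2,P_2,e_1$ then yields the desired Hamilton cycle through $e_1$ and $e_2$. The existence of $P_1$ and $P_2$ will be established by the same argument, so I focus on $P_1$.

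First I would classify the vertices of $V_1$ using the superextremal structure. Let $B_1\subseteq V_1$ be the set of \emph{bad} vertices, meaning those with fewer than $(1/2-\varepsilon)n$ neighbors inside $V_1$; by \ref{A3} we have $|B_1|\leq \varepsilon n$, while by \ref{A2} every bad vertex still has at least $(1/4-\varepsilon)n$ neighbors in $V_1$. Every \emph{good} vertex has at least $(1/2-\varepsilon)n\geq (1-3\varepsilon)|V_1|$ neighbors inside $V_1$, so the good part of $G[V_1]$ is quasi-complete: any two good vertices share at least $(1-7\varepsilon)|V_1|$ common neighbors inside $V_1$, and in particular are themselves adjacent after a small refinement.

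Next I would \emph{absorb} the bad vertices. For each $w\in B_1$, pick two good vertices $x_w,y_w$ that are neighbors of $w$ and adjacent to each other; since $w$ has $\geq (1/4-2\varepsilon)n$ good neighbors and the good part is quasi-complete, such a pair exists, and the pairs can be chosen pairwise disjoint by greedy selection (the budget $\varepsilon n$ is tiny compared to the number of available good vertices). If $u_1$ or $u_2$ is itself bad, treat it by choosing a single good neighbor $x$ and demanding that the eventual Hamilton path of the good part terminates at $x$, which we then prepend $u_1$ (respectively append $u_2$) to. With these reservations in hand, it suffices to find a Hamilton path in $G[V_1\setminus B_1^*]$ (where $B_1^*$ collects $B_1$ together with any bad endpoints) between the prescribed endpoints that uses every reserved edge $x_wy_w$; replacing each $x_wy_w$ with the path $x_w w y_w$ reinstates the bad vertices and produces $P_1$.

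The core task is therefore to produce a Hamilton path in a graph that is $\varepsilon'$-close to complete, with two prescribed endpoints and with $O(\varepsilon n)$ prescribed edges that must be traversed. I would handle this by contracting each reserved edge to a single vertex (the contracted graph is still quasi-complete), invoking Dirac/Chv\'atal--Erd\H{o}s Hamilton-connectivity in a nearly complete graph to connect the two endpoints by a Hamilton path, and then uncontracting. The only subtlety—which I expect to be the main technical obstacle—is orchestrating the endpoint conditions simultaneously with the reserved-edge conditions when $u_1$ or $u_2$ is bad and near another bad vertex; this is handled by choosing the reservation pairs in order, deferring the endpoint reservation last so that the pool of good vertices available for it is still overwhelmingly large. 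Applying the same procedure inside $V_2$ with endpoints $v_1,v_2$ yields $P_2$, and gluing with $e_1,e_2$ completes the proof.
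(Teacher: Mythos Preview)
The present paper does not give its own proof of this lemma; it is quoted from Coulson--Perarnau \cite{bounded} (their Lemma~5.5) and used as a black box, so there is no in-paper argument to compare against.

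Your sketch is the standard route for this type of statement and matches what one expects the original proof to do: build a Hamilton $u_1$--$u_2$ path inside $G[V_1]$ and a Hamilton $v_1$--$v_2$ path inside $G[V_2]$, in each case absorbing the at most $\varepsilon n$ low-degree vertices via reserved length-$2$ detours through good neighbours, and then exploit the $(1-O(\varepsilon))$-density of the good part for Hamilton-connectivity. The only spot that deserves a word of caution is the contraction/uncontraction step: when you uncontract a reserved edge $x_wy_w$, you need the two path-neighbours of the contracted vertex to be assignable one to $x_w$ and one to $y_w$. The clean fix is to let the contracted vertex be adjacent only to $N(x_w)\cap N(y_w)$; since $x_w,y_w$ are good this is still a $(1-O(\varepsilon))$-fraction of $V_i$, so the contracted graph remains nearly complete and Hamilton-connected between the prescribed endpoints.
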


In order to apply Lemma \ref{switch} to find a rainbow Hamilton cycle, the last piece is to show that many admissible switchings exist in a superextremal two-clique.

\begin{lemma}[Lemma 5.6, \cite{bounded}]\label{switchtwoclique}
Let $n\in \mathbb{N}$, and suppose $1/n\ll\mu\ll\varepsilon\ll 1$. Let $G$ be an $\varepsilon$-superextremal two-clique with partition $V(G)=V_1\cup V_2$ satisfying $E_G(V_1,V_2)=Z$, where $Z$ is composed by two vertex disjoint edges between $V_1$ and $V_2$. Let $\mathop{\mathcal{H}}\limits^{\rightharpoonup}$ be an oriented Hamilton cycle of $G$. Then for every $e\in E(\mathcal{H})\setminus Z$, there are at least $n^2/300$ admissible switchings $\mathcal{S}_i(\mathop{\mathcal{H}}\limits^{\rightharpoonup};e,e')$ for some $e'\in E(G)\setminus E(\mathcal{H})$ and $i\in \{1,2\}$.
\end{lemma}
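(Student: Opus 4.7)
The plan is to count admissible $\mathcal{S}_1$-switchings via a reparametrization in terms of the cycle-neighbors of the endpoints of $e$. Fix $e=vv^+\in E(\mathcal{H})\setminus Z$; since $E_G(V_1,V_2)=Z$ and $e\notin Z$, both endpoints lie in the same part, which by symmetry I take to be $V_1$. The key observation is that because $|Z|=2$, every vertex $x\in V_1$ has $d_G(x,V_2)\leq 1$, so
\[
d_G(x,V_1)\geq \delta(G)-1\geq \frac{n}{2}-1
\]
uniformly over $V_1$. This is much stronger than the generic property \ref{A3} and is the engine of the whole count; in particular $|N_G(v,V_1)|, |N_G(v^+,V_1)|\geq n/2-1$, while $|V_1|\leq n/2+\varepsilon n/2$ by \ref{A1}.

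Admissibility of $\mathcal{S}_1(\mathop{\mathcal{H}}\limits^{\rightharpoonup};e,uw)$ requires $uw\in E(G)\setminus E(\mathcal{H})$, the boundary adjacencies $vu^+, w^-v^+\in E(G)$, and that $v$ lie on the directed path from $w$ to $u$ in $\mathop{\mathcal{H}}\limits^{\rightharpoonup}$. Setting $x:=u^+$ and $y:=w^-$, these translate to $x\in N_G(v,V_1)$, $y\in N_G(v^+,V_1)$, and $x^-y^+\in E(G)$. I further restrict to $x^-,y^+\in V_1$: only the $V_1$-endpoints of the two $Z$-edges can have a cycle-neighbor in $V_2$, so this excludes at most $O(1)$ choices per side. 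Because the resulting switching only deletes and adds edges inside $V_1$, both $Z$-edges automatically survive in the switched cycle, so the output is genuinely a Hamilton cycle containing $Z$.

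For the counting step, define $L=\{x^-:x\in N_G(v,V_1),\ x^-\in V_1\}$ and $R=\{y^+:y\in N_G(v^+,V_1),\ y^+\in V_1\}$, both subsets of $V_1$ of size at least $n/2-O(1)$. Inclusion-exclusion inside $V_1$ yields $|N_G(u,V_1)\cap R|\geq n/2-O(\varepsilon n)$ for every $u\in L$, so the number of ordered pairs $(x,y)$ meeting $x^-y^+\in E(G)$ together with the boundary adjacencies is at least $n^2/4-O(\varepsilon n^2)$. For each unordered pair of distinct cyclic positions of $x$ and $y$, exactly one of the two orderings places $v$ on the directed path from $y^+$ to $x^-$, so the orientation condition halves the count to at least $n^2/8-O(\varepsilon n^2)$. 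Subtracting the $O(n)$ remaining losses (vertex coincidences among $\{u,u^+,v,v^+,w,w^-\}$, $uw\in E(\mathcal{H})$, $uw=e$, and degeneracies adjacent to $v$ or $v^+$) leaves far more than $n^2/300$ admissible $\mathcal{S}_1$-switchings once $\varepsilon$ is small and $n$ is large. The main obstacle I anticipate is the careful bookkeeping of these small exclusions: none is deep in isolation, but they all need to be kept to $O(n)$ so as not to swamp the inclusion-exclusion estimate, and the orientation halving needs to be verified to be a clean factor of two rather than something slightly worse that would force me to use $\mathcal{S}_2$-switchings as well.
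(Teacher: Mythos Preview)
The paper does not prove this lemma; it is quoted as Lemma~5.6 of \cite{bounded} and used as a black box, so there is no in-paper argument to compare against.

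Your proposal nonetheless has a genuine gap. You claim that ``every vertex $x\in V_1$ has $d_G(x,V_2)\leq 1$, so $d_G(x,V_1)\geq \delta(G)-1\geq \frac{n}{2}-1$,'' but the hypothesis $\delta(G)\geq n/2$ is \emph{not} part of the lemma: an $\varepsilon$-superextremal two-clique is defined solely by \ref{A1}--\ref{A3}. This is not a technicality. In the proof of Lemma~\ref{twoclique} the present lemma is applied to $\hat{G}$, which is obtained from a $\mu n$-bounded subgraph of $G$ by deleting two entire color classes and all cross-edges except $Z$; this graph certainly need not have minimum degree $n/2$. Your ``engine'' inequality is therefore unjustified, and you must fall back on \ref{A2} (which only gives $(1/4-\varepsilon)n$ when $v$ or $v^+$ happens to be exceptional) together with \ref{A3}. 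The count can still be pushed through---$|L|,|R|\geq(1/4-\varepsilon)n-O(1)$ from \ref{A2}, and inclusion--exclusion against the $(1/2-\varepsilon)n$ bound from \ref{A3} for the non-exceptional $u\in L$ yields on the order of $n^2/16$ ordered pairs, comfortably above $n^2/300$ after the orientation constraint---but your argument as written does not establish this. Your halving step is also loose: the two orderings of $\{x,y\}$ carry \emph{different} adjacency requirements (one needs $x\in N_G(v)$, $y\in N_G(v^+)$, $x^-y^+\in E(G)$; the other needs $y\in N_G(v)$, $x\in N_G(v^+)$, $y^-x^+\in E(G)$), so ``exactly one ordering satisfies the orientation condition'' does not directly halve the count of adjacency-valid ordered pairs. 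A cleaner route is to parametrize by cycle positions $u=a_k$, $w=a_m$ with $1\leq k<m\leq n-1$ and sum directly.
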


\begin{proof}[Proof of Lemma \ref{twoclique}]
Choose $0<1/n\ll\mu,\gamma\ll\varepsilon\ll1$. Let $G$ be a strongly edge-colored $n$-vertex graph with $\delta(G)\geq n/2$ which is $\gamma$-close to $2K_{n/2}$. Then by Lemma \ref{supertwoclique}, there is a partition $V_1\cup V_2$ of $V(G)$ with $|V_1|\leq |V_2|$ such that
\begin{itemize}[leftmargin=2em]
  \item $G$ is an $\varepsilon$-superextremal two-clique with this partition, and
  \item either $\delta\left(G[V_1,V_2]\right)\geq 1$ or $d_G(v,V_2)\geq 2$ for every $v\in V_1$.
\end{itemize}

In order to apply Lemmas~\ref{switchtwoclique} and~\ref{switch} to obtain a rainbow Hamilton cycle in $G$, 
we first construct a rainbow edge set $Z$ consisting of two vertex-disjoint edges between $V_1$ and $V_2$. 
By~\ref{A2}, for each $i\in\{1,2\}$ and every $v\in V_i$, we have
\[
d_G(v,V_i)\ge (1/4-\varepsilon)n \ge 2.
\]
If $d_G(v,V_2)\ge 2$ for every $v\in V_1$, then for any $v\in V_1$ and any $u\in N_G(v,V_1)$, 
we may choose two distinct vertices $w,w'\in V_2$ such that $vw,uw'\in E_G(V_1,V_2)$. 
Since the set $\{vw,uw'\}$ is not an induced matching, the colors of $vw$ and $uw'$ are distinct, 
and hence $Z=\{vw,uw'\}$ is the desired rainbow edge set.

Now suppose that $\delta(G[V_1,V_2])\ge 1$. 
Choose a vertex $v$ of maximum degree in $G[V_1,V_2]$, and assume without loss of generality that $v\in V_1$. 
Since $d_G(v,V_1)\ge 2$, we may choose a vertex $u\in N_G(v,V_1)$. 
Moreover, as $\delta(G[V_1,V_2])\ge 1$, there exist vertices $w,w'\in V_2$ such that 
$vw,uw'\in E_G(V_1,V_2)$, with $w\neq w'$ if possible.
If $w\neq w'$, then $Z=\{vw,uw'\}$ is the desired rainbow edge set. 
Otherwise, $w=w'$, and hence $N_G(v,V_2)=N_G(u,V_2)=\{w\}$, which implies that $d_G(w,V_1)\ge 2 > d_G(v,V_2)$,
contradicting the choice of $v$ as a vertex of maximum degree in $G[V_1,V_2]$.

Let $Z=\{e_1,e_2\}$ be the edge set constructed as above. By Corollary~\ref{almostbd}, 
$G$ contains a $\mu n$-bounded subgraph $G'$ such that 
$d_{G'}(v)\ge d_G(v)-\mu^2n$ for every $v\in V(G)$. 
Except for $e_1$ and $e_2$, we delete all edges of $G'[V_1,V_2]$ and all edges in the two color classes containing $e_1$ and $e_2$, respectively. 
Denote the resulting graph by $\hat{G}$. Recall that $\mu n\leq \varepsilon n$. Then $\hat{G}$ is a $\mu n$-bounded 
$2\varepsilon$-superextremal two-clique such that 
$E_{\hat{G}}(V_1,V_2)=Z$, and the color of each edge in $Z$ is unique in $E(\hat{G})$.
By Lemma~\ref{HCtwoclique}, $\hat{G}$ has a Hamilton cycle containing $e_1$ and $e_2$. 
By Lemma~\ref{switchtwoclique}, for any oriented Hamilton cycle 
$\mathop{\mathcal{H}}\limits^{\rightharpoonup}$ of $\hat{G}$ and every edge 
$e\in E(\mathcal{H})\setminus Z$, there are at least $n^2/300$ admissible switchings 
$\mathcal{S}_i(\mathop{\mathcal{H}}\limits^{\rightharpoonup};e,e')$ for some 
$e'\in E(\hat{G})\setminus E(\mathcal{H})$ and $i\in\{1,2\}$. 
Applying Lemma~\ref{switch}, we obtain a rainbow Hamilton cycle of $\hat{G}$, which is also a rainbow Hamilton cycle of $G$.
\end{proof}

\subsection{Graphs close to $K_{n/2,n/2}$}
In this part, we complete the proof of Theorem \ref{HC} by showing that Theorem \ref{HC} holds for graphs which are $\gamma$-close to $K_{n/2,n/2}$. 

\begin{lemma}\label{bip}
Let $n\in \mathbb{N}$, and suppose $1/n\ll\gamma\ll1$. Let $G$ be a strongly edge-colored $n$-vertex graph with $\delta(G)\geq n/2$. If $G$ is $\gamma$-close to $K_{n/2,n/2}$, then either $G$ contains a rainbow Hamilton cycle, or there is a partition $V(G)=V_1\cup V_2$ with $|V_1|\leq |V_2|$ such that $G[V_2]$ is a $\left(\frac{|V_2|-|V_1|}{2}\right)$-regular graph and $\left|\mathcal{C}_{G[V_2]}\right|=|V_2|-|V_1|-1$.
\end{lemma}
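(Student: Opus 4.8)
The plan is to follow the same switching strategy used in the $2K_{n/2}$ case, but now adapted to the bipartite-like structure, and to show that the only obstruction to producing the required rainbow edge set (to feed into Lemma~\ref{switch}) is precisely the regular, color-minimal configuration in the statement. First I would invoke an analogue of Lemma~\ref{supertwoclique} from \cite{bounded} for graphs $\gamma$-close to $K_{n/2,n/2}$: there is a partition $V(G)=V_1\cup V_2$ with $|V_1|\le|V_2|$ so that almost all edges go between $V_1$ and $V_2$, each vertex has at most $\varepsilon n$ neighbors on its own side, and (using $\delta(G)\ge n/2$) the bipartite graph $G[V_1,V_2]$ is very dense, in fact close to complete and of minimum degree around $n/2$. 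Call such a partition an $\varepsilon$-superextremal bipartite pair. Since $|V_1|\le|V_2|$, the cycle must use roughly $|V_2|-|V_1|$ edges inside $V_2$ to ``absorb'' the surplus; so the natural move is to fix a small rainbow set $Z$ of $|V_2|-|V_1|$ vertex-disjoint edges inside $V_2$ (a near-perfect matching of $G[V_2]$ restricted to the surplus, extended by edges between the parts), plus possibly one edge inside $V_1$ when $|V_1|=|V_2|$, such that $Z$ is rainbow and spans all of $V_2\setminus$(matched-to-$V_1$) in the right way, and such that $G-Z$ still admits a Hamilton cycle through $Z$ by a bipartite version of Lemma~\ref{HCtwoclique}.

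Next I would establish a switching lemma analogous to Lemma~\ref{switchtwoclique} for the bipartite-extremal case: for any oriented Hamilton cycle of the cleaned-up graph $\hat G$ (obtained from a $\mu n$-bounded subgraph via Corollary~\ref{almostbd}, then deleting all but the $Z$-edges among the ``bad'' edges and the color classes meeting $Z$) and any edge $e$ not in $Z$, there are $\Omega(n^2)$ admissible switchings $\mathcal S_i(\cdot;e,e')$. This is where the bipartite structure actually helps: most of the cycle alternates between $V_1$ and $V_2$, so for a typical edge $e=vv^+$ with $v\in V_1,v^+\in V_2$, choosing $e'=uw$ with $u\in V_1$, $w\in V_2$ generic, both reconnection patterns $C_1,C_2$ keep the new ``cross'' edges inside the dense bipartite part $G[V_1,V_2]$, hence present in $\hat G$; a counting argument (like the one behind Lemma~\ref{switchtwoclique}) then gives $\ge cn^2$ choices. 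Feeding this into Lemma~\ref{switch} with this $Z$ yields a rainbow Hamilton cycle of $\hat G\subseteq G$.

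The crux — and the only place the exceptional outcome can arise — is the construction of the rainbow set $Z$ of $|V_2|-|V_1|$ vertex-disjoint edges inside $V_2$ together with enough edges between the parts, all receiving distinct colors. Here the strong edge-coloring is a double-edged sword: on one hand any two edges at distance $\ge 2$ automatically get distinct colors (non-induced-matching), which makes most disjoint pairs rainbow for free; on the other hand if $G[V_2]$ is extremely sparse we may be forced to reuse colors. Concretely, set $d:=|V_2|-|V_1|$. The minimum degree condition forces $d_G(v,V_2)\ge d_G(v)-|V_1|\ge \frac n2-|V_1|=\frac{d}{2}$ for every $v\in V_2$, so $G[V_2]$ has minimum degree at least $d/2$ and hence at least $|V_2|\cdot d/4$ edges, plenty to find $d$ disjoint ones; moreover by the strong coloring, among $d$ pairwise-disjoint edges of $G[V_2]$ the colors are automatically all distinct \emph{unless} two of them lie in a common induced-matching color class — impossible for disjoint edges of an induced matching only if they are the \emph{same} edge, so in fact disjoint edges of a strongly edge-colored graph always have distinct colors. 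Thus a rainbow $Z$ of size $d$ inside $V_2$ exists whenever $G[V_2]$ has a matching of size $d$. The failure case is therefore exactly when $\nu(G[V_2])<d$: by the Gallai–Edmonds / König-type analysis combined with $\delta(G[V_2])\ge d/2$, one shows this forces $G[V_2]$ to be $(d/2)$-regular; and then, tracking how many colors a strongly edge-colored $(d/2)$-regular graph with no matching of size $d$ can use — here Theorem~\ref{regular} is the relevant rigidity statement — pins down $|\mathcal C_{G[V_2]}|=2\cdot(d/2)-1=d-1=|V_2|-|V_1|-1$, which is exactly the exceptional conclusion.

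I expect this last step — proving that ``$\nu(G[V_2])<|V_2|-|V_1|$ together with $\delta(G[V_2])\ge \frac{|V_2|-|V_1|}{2}$'' rigidly forces $(d/2)$-regularity \emph{and} the color count $d-1$ — to be the main obstacle; the switching and Hamilton-cycle-embedding parts should go through more or less verbatim from the $2K_{n/2}$ argument in \cite{bounded} once the bipartite analogues of Lemmas~\ref{supertwoclique}--\ref{switchtwoclique} are in place. A secondary technical nuisance is the boundary case $|V_1|=|V_2|$ (so $d=0$), where no edges inside $V_2$ are needed but one needs a single suitable rainbow chord and a different parity bookkeeping; this should be handled by a short separate argument showing a rainbow Hamilton cycle always exists there, so that the exceptional partition only occurs with $|V_1|<|V_2|$ as the statement permits.
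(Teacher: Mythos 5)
Your switching half of the argument matches the paper's approach: Lemma~\ref{superbiclique} for the partition, Corollary~\ref{almostbd} to get a $\mu n$-bounded subgraph, clean-up, Lemma~\ref{HCbiclique} to place the short matching $M$ inside a Hamilton cycle, Lemma~\ref{switchbi} for the $\Omega(n^2)$ admissible switchings, and Lemma~\ref{switch} to close. That part is in order.

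The serious gap is in your analysis of the failure case, and it rests on a false claim about strong edge colorings. You write that among pairwise-disjoint edges of $G[V_2]$ the colors are automatically all distinct because disjoint edges cannot lie in a common induced-matching color class. This is exactly backwards: a color class in a strongly edge-colored graph is an induced matching, i.e.\ \emph{precisely} a set of pairwise disjoint, pairwise non-adjacent edges of the same color. Two disjoint edges with no edge between them are perfectly entitled to share a color. So ``$G[V_2]$ has a matching of size $d$'' does \emph{not} imply ``$G[V_2]$ has a rainbow matching of size $d$,'' and your reduction to $\nu(G[V_2])<d$ and a Gallai--Edmonds analysis attacks the wrong obstruction. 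Indeed, the exceptional graphs in the statement have a perfect matching in $G[V_2]$ (they are $\bigl(\tfrac{|V_2|-|V_1|}{2}\bigr)$-regular with $|V_2|$ large) yet no rainbow matching of size $d$, because they use only $d-1$ colors. Your reasoning would incorrectly conclude that these graphs are not exceptional.

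What is actually needed, and what the paper proves from scratch, is a rainbow-matching lemma: if a strongly edge-colored graph $H$ with $\delta(H)\ge k/2$ and $\Delta(H)\le \varepsilon n$ has \emph{no rainbow matching of size $k$}, then $|\mathcal{C}_H|=k-1$ (Lemma~\ref{matching}). Applied to $H=G[V_2]$ with $k=|V_2|-|V_1|$ (using $\delta(G[V_2])\ge n/2-|V_1|=k/2$, as you correctly compute), this gives the color count directly. The regularity then falls out of the strong coloring: for any edge $uv$ in $G[V_2]$, all edges incident to $u$ or to $v$ must get distinct colors (any two such edges are at distance at most one), so $d(u)+d(v)-1\le |\mathcal{C}_{G[V_2]}|=k-1$, forcing $d(u)=d(v)=k/2$. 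You invoke Theorem~\ref{regular} (the Kneser covering characterization) here, but that theorem is not used in this proof at all --- it only characterizes the extremal graphs after the fact. Without a lemma of the type of Lemma~\ref{matching}, which the paper proves via a nontrivial case analysis on color-class sizes and maximal rainbow matchings, your argument has no mechanism to produce the conclusion $|\mathcal{C}_{G[V_2]}|=|V_2|-|V_1|-1$.
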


Let $0\leq\alpha, \varepsilon, \nu\leq 1$. An $n$-vertex graph $G$ is an \emph{$(\alpha,\varepsilon,\nu)$-superextremal biclique} if there exists a partition $V(G)=V_1\cup V_2$ satisfying the following.
\stepcounter{propcounter}
\begin{enumerate}[label =\rm({\bfseries \Alph{propcounter}\arabic{enumi}})]
\item\label{B1}  $0\leq|V_2|-|V_1|\leq \alpha n$;
  \item\label{B2} $d_G(v,V_2)\geq \nu n$ for all vertices $v\in V_1$;
  \item\label{B3} $d_G(v,V_1)\geq (1/4-\varepsilon)n$ for all vertices $v\in V_2$;
  \item\label{B4} $d_G(v,V_2)\leq 2\nu n$ for all vertices $v\in V_2$, unless $|V_1|=\lfloor n/2\rfloor$;
  \item\label{B5} For $i\in\{1,2\}$, $d_G(v,V_{3-i})\geq (1/2-\varepsilon)n$ for all but at most $\varepsilon n$ vertices $v\in V_i$.
\end{enumerate}

The next lemma shows that any $n$-vertex graph with $\delta(G)\geq n/2$ which is close to $K_{n/2,n/2}$ is also a superextremal biclique.

\begin{lemma}[Lemma 6.3, \cite{bounded}]\label{superbiclique}
Let $n\in \mathbb{N}$, and suppose $1/n\ll\gamma\ll\alpha\ll\varepsilon\ll\nu\ll 1$. Let $G$ be an $n$-vertex graph with $\delta(G)\geq n/2$ that is $\gamma$-close to $K_{n/2,n/2}$. Then $G$ is an $(\alpha,\varepsilon,\nu)$-superextremal biclique.
\end{lemma}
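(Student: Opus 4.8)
The plan is to keep the partition $V(G)=A\cup B$ provided by $\gamma$-closeness (so $|A|=\lfloor n/2\rfloor$, $|B|=\lceil n/2\rceil$ and $e_G(A)\le\gamma n^2$, where $e_G(A)$ is the number of edges inside $A$) and to build $V_1,V_2$ from it by relocating only a tiny set of ``bad'' vertices. I would first record that $G[A,B]$ is almost complete: from $\sum_{v\in A}d_G(v)=2e_G(A)+e_G(A,B)$ and $\sum_{v\in A}d_G(v)\ge|A|\cdot\tfrac n2$ we get $e_G(A,B)\ge|A|\cdot\tfrac n2-2\gamma n^2\ge(\tfrac14-2\gamma)n^2-\tfrac n2$, so the number of non-edges of $G$ between $A$ and $B$ is at most $|A||B|-e_G(A,B)\le3\gamma n^2$.

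Now set $\delta_1:=\gamma^{1/3}$ and define
\[
W_A:=\{v\in A: d_G(v,B)\le|B|-\delta_1 n\ \text{ or }\ d_G(v,A)>\nu n\},\qquad W_B:=\{v\in B: d_G(v,A)\le|A|-\delta_1 n\},
\]
and $W:=W_A\cup W_B$. Each $v\in W_A$ either contributes at least $\delta_1 n$ non-edges between $A$ and $B$ or has at least $\nu n$ neighbours inside $A$, so $|W_A|\le3\gamma^{2/3}n+2\gamma n/\nu$, and likewise $|W_B|\le3\gamma^{2/3}n$; hence by the hierarchy $\gamma\ll\alpha\ll\varepsilon\ll\nu\ll1$ we have $|W|\ll\alpha n,\varepsilon n,\nu n$. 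By construction every $v\in A\setminus W_A$ has $d_G(v,B)\ge|B|-\delta_1 n$ and $d_G(v,A)\le\nu n$, and every $v\in B\setminus W_B$ has $d_G(v,A)\ge|A|-\delta_1 n$.

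I would then form the partition by distributing $W$: keep $A\setminus W_A$ on the $V_2$ side and $B\setminus W_B$ on the $V_1$ side, split $W=W^{(1)}\cup W^{(2)}$ with $W^{(1)}:=\{v\in W:d_G(v,A)\ge\tfrac32\nu n\}$, and send $W^{(1)}$ to $V_1$ and $W^{(2)}$ to $V_2$ (up to moving a bounded number of vertices back, see below). Conditions \ref{B2}, \ref{B3}, \ref{B5} then follow by direct counting, since all relocated sets are tiny: a vertex placed in $V_1$ has $d_G(v,A)\ge\tfrac32\nu n$, so $d_G(v,V_2)\ge\tfrac32\nu n-|W_A|\ge\nu n$; a vertex $v$ placed in $V_2$ has $d_G(v,A)<\tfrac32\nu n$ and $d_G(v)\ge n/2$, hence $d_G(v,B)\ge n/2-\tfrac32\nu n$ and $d_G(v,V_1)\ge n/2-\tfrac32\nu n-|W_B|\ge(\tfrac14-\varepsilon)n$; and every core vertex in fact satisfies the stronger bound $d_G(\cdot,V_{3-i})\ge(\tfrac12-\varepsilon)n$ required by \ref{B5}, while at most $|W|\le\varepsilon n$ vertices of each part are non-core.

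The step I expect to be the main obstacle is reconciling the size condition \ref{B1} with the sparsity condition \ref{B4}. Since $|V_2|$ differs from $n/2$ by at most $|W|=O(\gamma^{2/3}n+\gamma n/\nu)$, the bound $|V_2|-|V_1|\le\alpha n$ is automatic; the work is to force $|V_2|\ge|V_1|$, i.e.\ $|V_2|\ge\lceil n/2\rceil$, by choosing how many flexible vertices of $W$ are sent to $V_2$. If this can be done with $|V_2|=\lceil n/2\rceil$, then $|V_1|=\lfloor n/2\rfloor$ and \ref{B4} is vacuous. If $|V_2|>\lceil n/2\rceil$ is forced, then necessarily $V_2=(A\setminus W_A)\cup W^{(2)}$ (nothing extra can be removed), and since every vertex of $A\setminus W_A$ has $d_G(\cdot,A)\le\nu n$, every vertex of $W^{(2)}$ has $d_G(\cdot,A)<\tfrac32\nu n$, and $|V_2\cap B|\le|W|$, each $v\in V_2$ satisfies $d_G(v,V_2)\le\tfrac32\nu n+|W|\le2\nu n$, so \ref{B4} holds. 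The genuinely delicate case is when the natural choice would give $|V_2|<\lceil n/2\rceil$: one must then transfer a bounded number of vertices (at most $O(\gamma n/\nu)$, plus a parity correction) from $V_1$ to $V_2$ while preserving \ref{B3}, that is, the transferred vertices must still have roughly $n/4$ neighbours in $V_1$. I would handle this by a short sub-case analysis, selecting those vertices among ones of internal degree at least $n/4$ (which automatically retain about $n/4$ neighbours in $V_1$ and, being few, do not endanger \ref{B4} after shrinking the thresholds slightly), and, in the residual situation where too few such vertices exist, discarding the $A$-based construction in favour of $V_2=B$, $V_1=A$ (so that $|V_1|=\lfloor n/2\rfloor$ makes \ref{B4} vacuous and \ref{B2}, \ref{B3}, \ref{B5} are checked symmetrically after relocating the few vertices of $W_A$ with small $d_G(\cdot,B)$ and of $W_B$ with small $d_G(\cdot,A)$). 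In each branch the estimates close because of the large gap between the $O(\gamma^{2/3}n)$-scale of all corrections and the $\nu n$-, $\varepsilon n$-, $\alpha n$-scale tolerances in the definition of a superextremal biclique; keeping all of these branches coherent is the only real difficulty.
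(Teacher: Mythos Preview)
The paper does not prove this lemma: it is quoted as Lemma~6.3 of~\cite{bounded} (Coulson--Perarnau) and used as a black box, so there is no proof in the present paper to compare your attempt against.

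For what it is worth, your outline is the standard one for such partition-cleaning results and the preliminary counts (that $G[A,B]$ misses at most $3\gamma n^2$ edges, that $|W_A|,|W_B|=O(\gamma^{2/3}n+\gamma n/\nu)$) are correct. The place you flag as the real difficulty is indeed the one needing care, and your sketch there is not yet watertight: in the fallback ``set $V_1=A$, $V_2=B$ so that $|V_1|=\lfloor n/2\rfloor$ makes \ref{B4} vacuous, then relocate the few bad vertices'', the relocation changes $|V_1|$, so \ref{B4} is no longer automatically vacuous and you are back to having to verify it. One clean way out is to perform \emph{swaps} rather than one-sided relocations in that branch (move a bad vertex out of $V_1$ and simultaneously move a good vertex of $V_2$ into $V_1$, keeping $|V_1|=\lfloor n/2\rfloor$ throughout); the point is that there are plenty of good vertices in $B$ with $d_G(\cdot,A)\ge(1/2-\delta_1)n$, and these can absorb the swap while still satisfying \ref{B2}. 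If you want a self-contained proof rather than a citation, that branch should be written out explicitly.
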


The next two lemmas serve a similar purpose as Lemmas \ref{HCtwoclique} and \ref{switchtwoclique}.

\begin{lemma}[Lemma 6.7, \cite{bounded}]\label{HCbiclique}
Let $n\in \mathbb{N}$, and suppose $1/n\ll\alpha\ll\nu\ll\eta\ll 1$. Let $G$ be an $(\alpha,\eta,\nu)$-superextremal biclique on $n$ vertices with partition $V(G)=V_1\cup V_2$ such that $|V_1|\leq|V_2|$, and let $M$ be a matching in $G[V_2]$ of size $|V_2|-|V_1|$. Then $G$ has a Hamilton cycle containing $M$.
\end{lemma}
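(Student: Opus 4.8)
\emph{Proof idea.}
Put $d:=|V_2|-|V_1|\le\alpha n$, so that $M$ consists of $d$ edges inside $V_2$ and covers $2d$ of its vertices. The guiding principle is that the surplus of $d$ vertices in $V_2$ should be ``absorbed'' by exactly these $d$ edges and that, apart from $M$, the Hamilton cycle should use crossing edges only. Accordingly, I would first contract each $e=p_iq_i\in M$ to a single vertex $m_i$ and discard all remaining edges inside $V_2$, obtaining a \emph{balanced} bipartite (multi)graph $G'$ with parts $V_1$ and $U:=(V_2\setminus V(M))\cup\{m_1,\dots,m_d\}$, where $|V_1|=|U|$ and $m_i$ receives the neighbourhood $N_G(p_i,V_1)\cup N_G(q_i,V_1)$. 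A Hamilton cycle of $G'$ then unfolds to a Hamilton cycle of $G$ containing $M$: at each $m_i$ its two neighbours along the cycle are distinct (the cycle has length $\ge3$) and can be split, one onto $p_i$ and one onto $q_i$. So it suffices to find a Hamilton cycle in $G'$. By~\ref{B5}, $G'$ is near-complete and balanced: all but at most $\eta n$ vertices of each side are adjacent to all but at most $2\eta n$ vertices of the other side; the remaining ``exceptional'' vertices still have $d_{G'}(v)\ge\nu n-2d$ if $v\in V_1$ (by~\ref{B2}) and $d_{G'}(u)\ge(1/4-\eta)n$ if $u\in U$ (by~\ref{B3}).

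The heart of the argument is then to absorb every exceptional vertex into a short, vertex-disjoint, \emph{balanced} system of paths, in the spirit of the two-clique case (Lemma~\ref{HCtwoclique}). Since the exceptional set has size at most $2\eta n$, while every $V_1$-vertex has $\ge\nu n-2d$ neighbours in $U$ and every $U$-vertex has $\ge(1/4-\eta)n$ neighbours in $V_1$, each exceptional vertex has many more than $2\eta n$ \emph{good} neighbours on the opposite side --- here one uses that $\eta$ is tiny compared with $\nu$ and with $1/4$, and in particular that $(1/4-\eta)n>\eta n$, so every $U$-vertex is adjacent to many good $V_1$-vertices. Processing the exceptional vertices one at a time, I would greedily build at most $2\eta n$ pairwise vertex-disjoint length-two paths, each of the shape (good)--(exceptional)--(good), whose union covers all exceptional vertices and whose endpoints are all good; prepending one further good vertex on the appropriate side to each such path makes every path contain equally many vertices from $V_1$ and from $U$ (hence exactly one endpoint in each side) and leaves an equal number of uncovered good vertices in $V_1$ and in $U$.

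Finally, replacing each path of the system by a single crossing edge joining its two (good) endpoints yields a balanced bipartite graph on the surviving vertices that is still near-complete (minimum degree at least half its order minus $2\eta n$) and carries a distinguished matching of size at most $2\eta n$; a routine dense-bipartite argument (for instance, a Moon--Moser/rotation argument after contracting that matching) produces a Hamilton cycle through all distinguished edges, which unfolds back to a Hamilton cycle of $G'$, and hence of $G$, containing $M$. The step I expect to be the main obstacle is the construction of the absorbing system: one must simultaneously (i)~cover \emph{every} exceptional vertex, (ii)~keep the paths vertex-disjoint, and (iii)~make the outcome balanced, and this is precisely where the conditions \ref{B2}--\ref{B5} must be combined; at a more basic level, the whole reduction works only because the exceptional set is negligible next to the degree guarantees ($\eta\ll\nu$ and $d\ll\eta$), so the recurring delicate point is to check at each stage that enough good vertices remain available. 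By contrast, the contraction step and the final closing-up are standard manipulations of near-complete balanced bipartite graphs.
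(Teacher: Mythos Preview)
The paper does not prove this lemma at all; it is quoted verbatim from Coulson--Perarnau \cite{bounded} and used as a black box. So there is no ``paper's own proof'' to compare against, and your write-up is an attempt to reconstruct an argument the present paper never gives.

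That said, your sketch contains a genuine gap. You repeatedly rely on the claim that ``$\eta$ is tiny compared with $\nu$'', but the hierarchy in the hypothesis is $1/n\ll\alpha\ll\nu\ll\eta\ll1$: it is $\nu$ that is tiny compared with $\eta$, not the other way round. This is not cosmetic. In the $(\alpha,\eta,\nu)$-superextremal biclique the only lower bound on $d_G(v,V_2)$ for an exceptional $v\in V_1$ is $\nu n$ (from~\ref{B2}), whereas~\ref{B5} allows up to $\eta n$ exceptional vertices on each side. Since $\nu n\ll\eta n$, an exceptional $V_1$-vertex may have all of its $V_2$-neighbours lying inside the exceptional set of $V_2$; more to the point, even greedy disjoint absorption fails, because you are trying to process up to $\eta n$ vertices each of which has only $\nu n$ candidates. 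Your sentence ``each exceptional vertex has many more than $2\eta n$ good neighbours on the opposite side'' is therefore false for $V_1$-exceptionals under the stated hierarchy.

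The parts of your plan that do survive are the contraction of $M$ to reduce to a balanced bipartite problem, and the treatment of exceptional $V_2$-vertices (where~\ref{B3} gives the robust $(1/4-\eta)n$ degree into $V_1$, which really does dominate $\eta n$). What breaks is precisely the absorption of $V_1$-exceptionals, and any correct proof must handle them by a different mechanism than the naive greedy length-two paths you describe --- for instance, by exploiting that every $V_2$-vertex has large degree back into $V_1$ (so one can route through longer paths), together with a more careful disjointness argument such as Hall's theorem rather than greediness. As written, the ``main obstacle'' you flag is indeed the obstacle, but your proposed resolution of it does not work because it rests on a reversed inequality between $\nu$ and $\eta$.
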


\begin{lemma}[Theorem 6.8, \cite{bounded}]\label{switchbi}
Let $n\in \mathbb{N}$, and suppose $1/n\ll\mu\ll\alpha\ll\beta\ll\nu\ll\eta\ll 1$. Let $G$ be an $(\alpha,\eta,\nu)$-superextremal biclique on $n$ vertices with partition $V(G)=V_1\cup V_2$ such that $|V_1|\leq|V_2|$, and let $M$ be a matching in $G[V_2]$ of size at most $\alpha n$. Suppose that $G$ and $M$ satisfy
\begin{itemize}
   \item $E(G[V_1])=\emptyset$ and $E(G[V_2])=M$;
   \item $\max\{d_{G}(u,V_2),d_{G}(v,V_1)\}\geq(1/2-\eta)n$ for all $u\in V_1, v\in V_2$ with $uv\in E(G)$.
 \end{itemize}
 Then for every oriented Hamilton cycle $\mathop{\mathcal{H}}\limits^{\rightharpoonup}$ of $G$ and every edge $e\in E(\mathcal{H})\setminus M$, there are at least $\beta n^2$ admissible switchings $\mathcal{S}_i(\mathop{\mathcal{H}}\limits^{\rightharpoonup};e,e')$ for some $e'\in E(G)\setminus E(\mathcal{H})$ and $i\in\{1,2\}$.
\end{lemma}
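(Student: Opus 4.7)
The plan is to adapt the bipartite extremal framework of Coulson and Perarnau \cite{bounded} to the strongly edge-colored setting. The key idea is to locate a ``balancing'' rainbow matching $M$ inside the heavier side $V_2$ of the near-bipartite partition, use Lemma \ref{HCbiclique} to produce a Hamilton cycle containing $M$, and then run the switching machinery of Lemmas \ref{switchbi} and \ref{switch} to upgrade to a rainbow Hamilton cycle; the extremal alternative stated in the conclusion should correspond exactly to the case where no such rainbow $M$ can be constructed.

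Concretely, I would first invoke Lemma \ref{superbiclique} to fix a partition $V(G)=V_1\cup V_2$ with $|V_1|\le|V_2|$ under which $G$ is an $(\alpha,\varepsilon,\nu)$-superextremal biclique. Write $d=|V_2|-|V_1|$ and $k=d/2$, and observe that $\delta(G)\ge n/2$ combined with $|V_1|=n/2-k$ forces $d_G(v,V_2)\ge k$ for every $v\in V_2$. Next, apply Corollary \ref{almostbd} to pass to a $\mu n$-bounded spanning subgraph $G^{*}\subseteq G$ that retains the superextremal biclique structure up to a $\mu^{2}n$-loss in degrees. I would then search for a rainbow matching $M\subseteq G^{*}[V_2]$ of size $d$; given such an $M$, I delete from $G^{*}$ the remaining edges inside $V_1$ and $V_2$ together with all $V_1$-$V_2$ edges sharing a color with some edge of $M$, producing a graph $\hat G$ in which $M$ is the full edge set of $\hat G[V_2]$, $\hat G[V_1]$ is empty, every color of $M$ is unique in $E(\hat G)$, and $\hat G$ remains superextremal with suitably updated parameters. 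Lemma \ref{HCbiclique} then supplies a Hamilton cycle of $\hat G$ containing $M$, Lemma \ref{switchbi} yields $\beta n^{2}$ admissible switchings off $M$ for every oriented Hamilton cycle of $\hat G$, and Lemma \ref{switch} upgrades this to a rainbow Hamilton cycle of $\hat G$, which is also one of $G$.

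The main obstacle, and the step that is genuinely new beyond \cite{bounded}, is to show that the failure to find such an $M$ forces $G[V_2]$ into the exact extremal configuration: $k$-regular with $|\mathcal{C}_{G[V_2]}|=2k-1$. I would argue this in two layers. First, I would bound the color count: if $G[V_2]$ had at least $2k$ distinct colors, then, starting from the empty matching and iteratively appending an edge from an unused color class on two uncovered vertices, a defect-Hall style argument on the ``color-vertex incidence'' bipartite graph---using crucially the \emph{induced}-matching structure of each color class, so that endpoints of an unused color cannot all be crowded into $V(M')$ without producing rival edges that can be swapped in---should guarantee that at each step an extending edge exists. Second, given the resulting bound $|\mathcal{C}_{G[V_2]}|\le 2k-1$, every $v\in V_2$ satisfies $d_{G[V_2]}(v)\le 2k-1$ because each color class is a matching, and combined with $\delta(G[V_2])\ge k$, any vertex of degree $\ge k+1$ would contribute an ``extra'' color that the same augmentation scheme can exploit to reach rainbow matching size $d$; hence the no-matching hypothesis forces exact $k$-regularity, and $|\mathcal{C}_{G[V_2]}|=2k-1$ then follows from the standard lower bound on the strong chromatic index of a $k$-regular graph. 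The delicate part I expect to have to grind through is the defect-Hall augmentation itself, where the objects being matched are color classes rather than individual edges, and where I must use both the strong edge-coloring and the induced-matching structure to certify that augmentations are never blocked.
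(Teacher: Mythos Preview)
Your proposal does not address the stated lemma. Lemma~\ref{switchbi} is the switching-count result for superextremal bicliques, quoted from \cite{bounded} (their Theorem~6.8); the present paper gives no proof of it and uses it as a black box. What you have written is instead a proof sketch of Lemma~\ref{bip}, the lemma that \emph{applies} Lemma~\ref{switchbi} as one of its tools---indeed you explicitly write ``Lemma~\ref{switchbi} yields $\beta n^{2}$ admissible switchings''. So nothing in your proposal goes toward the statement actually in question, which concerns counting admissible switchings in a fixed graph and has nothing to do with strong edge-colorings, rainbow matchings, or Corollary~\ref{almostbd}.

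If your intended target was in fact Lemma~\ref{bip}, your outline is close to the paper's argument but with two discrepancies worth noting. The paper locates the rainbow matching $M$ in $G[V_2]$ first and only afterward passes to the $\mu n$-bounded subgraph (reinserting $M$ at the end), whereas you reverse the order; this is minor but your version risks losing the matching to the $\mu^2 n$ degree drop. More substantively, the extremal alternative (no rainbow matching of size $|V_2|-|V_1|$) is handled in the paper by the standalone Lemma~\ref{matching}, whose proof is not a defect-Hall augmentation: one takes a maximum rainbow matching $M'$, exhibits a large monochromatic induced matching among the uncovered vertices, and then counts edges from it into $V(M')$ to force a vertex of degree exceeding the cap $\Delta(G[V_2])\le 2\nu n$ supplied by property~\ref{B4}, a contradiction. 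Your proposed augmentation scheme never invokes this maximum-degree bound, which is the actual engine of the contradiction, and the ``defect-Hall on color--vertex incidence'' step is stated as a hope rather than a mechanism.
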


The final lemma we need gives a characterization of strongly edge-colored graphs $G$ of bounded maximum degree that do not contain a rainbow matching of size $2\delta(G)$.

\begin{lemma}\label{matching}
Let $1/n\ll\varepsilon\ll 1$ and $k\in \mathbb{N}$ with $k\leq 2\varepsilon n$. Let $G$ be a strongly edge-colored $n$-vertex graph with $\delta(G)\geq \frac{k}{2}$ and $\Delta(G)\leq\varepsilon n$. If $G$ contains no rainbow matching of size $k$, then $|\mathcal{C}_G|=k-1$.
\end{lemma}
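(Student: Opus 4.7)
The plan is to establish the two inequalities $|\mathcal{C}_G|\ge k-1$ and $|\mathcal{C}_G|\le k-1$ separately. The former is unconditional under the hypotheses, while the latter is the contrapositive of the statement that $|\mathcal{C}_G|\ge k$ forces a rainbow matching of size $k$ in $G$.

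For the lower bound, I would fix any edge $uv\in E(G)$, which exists since $\delta(G)\ge k/2\ge 1$ (the easy cases $k\le 1$ can be dispatched separately). Because $G$ is strongly and in particular properly edge-colored, the color degrees at $u$ and $v$ equal the ordinary degrees $d_G(u)$ and $d_G(v)$. I claim that the sets of colors incident to $u$ and $v$ meet only in the color of $uv$: otherwise some color $c$ would appear on edges $uw$ and $vw'$ with $w\ne v$ and $w'\ne u$, and then $w=w'$ would contradict the matching property of the $c$-class, whereas $w\ne w'$ would make $uw$ and $vw'$ two edges of a common induced matching---forbidding the $G$-edge $uv$, a contradiction. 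Hence $|\mathcal{C}_G|\ge d_G(u)+d_G(v)-1\ge k-1$.

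For the upper bound I argue by contradiction: assume $|\mathcal{C}_G|\ge k$ and take a maximum rainbow matching $M$ of size $m\le k-1$; the goal is to produce a rainbow matching of size $m+1$. Set $S:=\mathcal{C}_G\setminus\mathrm{col}(M)$, so $|S|\ge 1$. By the maximality of $M$, every edge of every color $c\in S$ meets $V(M)$; combined with the matching property of the color class, this yields $|M_c|\le|V(M)|=2m\le 4\varepsilon n$. My main tool will be a swapping argument: for $c\in S$ and an edge $f=uv\in M_c$ with $u\in V(M)$ and $v\notin V(M)$, writing $e=uu'$ for the $M$-edge at $u$, the swap $M\mapsto (M\setminus\{e\})\cup\{f\}$ returns another rainbow matching of size $m$ whose uncovered vertex has shifted from $v$ to $u'$ and whose color set has exchanged $\mathrm{col}(e)$ for $c$. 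Chaining such $S$-swaps, possibly interleaved with color-preserving swaps $e\mapsto e^\ast$ (replacing an $M$-edge by another edge of the same color disjoint from $V(M)$ whenever one exists), should trace an augmenting sequence to a rainbow matching of size $m+1$, contradicting the maximality of $m$.

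The main obstacle I anticipate is the rigid regime in which, simultaneously, every $M_c$ with $c\in S$ has all its edges inside $V(M)$ and no color in $\mathrm{col}(M)$ admits an alternative edge disjoint from $V(M)$. Here the $S$-color classes are confined to the small set $V(M)$ of size at most $4\varepsilon n$, and the strong edge-coloring property sharply constrains how they can fit together. To break out of this regime one needs to combine several $S$-swaps with matched color-preserving substitutions, exploiting both the global count $\sum_c|M_c|=|E(G)|\ge nk/4$ and the cap $|M_c|\le 2m$ for $c\in S$. The extremal graph at the boundary of this argument should be the $(k/2)$-regular Kneser cover from Theorem~\ref{regular}, which attains $|\mathcal{C}_G|=k-1$ and contains no rainbow matching of size $k$.
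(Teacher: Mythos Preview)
Your lower bound $|\mathcal{C}_G|\ge k-1$ is correct and matches the paper's one-line observation. The upper bound, however, has a genuine gap. You propose to chain $S$-swaps and color-preserving substitutions into an augmenting sequence, but you never carry this out; you explicitly flag the ``rigid regime'' as an unresolved obstacle and only gesture at how the global edge count and the cap $|M_c|\le 2m$ might help. Augmenting-path arguments for rainbow matchings are delicate, and nothing in your sketch explains why the swaps terminate in a larger rainbow matching rather than cycling among size-$m$ matchings. Most tellingly, your argument never invokes the hypothesis $\Delta(G)\le\varepsilon n$, which is precisely what the paper's proof contradicts.

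The paper takes a completely different route: it does not try to enlarge $M$ directly, but derives a contradiction to $\Delta(G)\le\varepsilon n$. After possibly setting aside one edge from a small color class (Case~1 of the proof), it fixes a maximum rainbow matching $M$ with $|M|\le k-2$ and observes that $H':=G\setminus V(M)$ uses only colors from $\mathcal{C}_M$ while still having $|E(H')|\gtrsim kn/4$. Pigeonhole over the at most $|M|<k$ available colors then yields a monochromatic induced matching $M'\subseteq H'$ of size $\gtrsim n/4$. Since each edge of $M'$ with endpoints of degree $\ge k/2$ sees at least $k-1$ distinct colors among its incident edges but $H'$ carries at most $k-2$ colors, every such edge of $M'$ sends an edge of a color outside $\mathcal{C}_M$ into $V(M)$. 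Letting $M_0\subseteq M$ be the $M$-edges receiving these escapes, maximality of $M$ forces the colors $\mathcal{C}_{M_0}$ to be absent from $H'$; a double count then concentrates $\gtrsim n|M_0|/8$ edges on the $2|M_0|$ vertices of $V(M_0)$, producing a vertex of degree exceeding $\varepsilon n$. The idea you are missing is exactly this: use the maximality of $M$ not to swap, but to funnel a linear number of edges onto the $O(k)$-vertex set $V(M_0)$, violating the maximum-degree bound.
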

\begin{proof}
Since $G$ is a strongly edge-colored graph and $\delta(G)\geq\frac{k}{2}$, it immediately follows that $|\mathcal{C}_G|\geq k-1$. Suppose on the contrary that $|\mathcal{C}_G|\geq k$, we derive a contradiction by finding a vertex of degree greater than $\varepsilon n$, and thus complete the proof.
\vspace{0.5em}

\noindent \textbf{Case 1.} There is a color class of size at most $\frac{k}{2}$.
\vspace{0.5em}

Suppose $uv$ is an edge from this color class, and let $H$ be the graph obtained from $G$ by deleting vertices $u,v$ and this color class. Note that $d_H(w)\geq \frac{k}{2}$ for all but at most $2\Delta(G)+k$ $(\leq 4\varepsilon n)$ vertices $w\in V(H)$.

Let $M$ be a maximum rainbow matching of $H$. Then $|M|\leq k-2$, 
since otherwise $M\cup\{uv\}$ would be a rainbow matching of $G$ of size at least $k$.
Let $H':=H\setminus V(M)$. Then we have
$$|E(H')|\geq \frac{kn}{4}-2\varepsilon n-\frac{k}{2}-2(k-2)\varepsilon n\geq \left(\frac{1}{4}-2\varepsilon\right)kn.$$
Since $M$ is a maximum rainbow matching of $H$, we have $\mathcal{C}_{H'}\subseteq \mathcal{C}_M$.
Hence, by the pigeonhole principle and the fact that $H$ is strongly edge-colored, there exists a monochromatic induced matching in $H'$ of size at least $(\frac{1}{4}-2\varepsilon)n$.
As there are at most $4\varepsilon n$ vertices of degree less than $\frac{k}{2}$ in $H$, we can get a monochromatic induced matching $M'$ of size at least $(\frac{1}{4}-6\varepsilon)n$ in $H'$ such that $d_H(w)\geq \frac{k}{2}$ for each $w\in V(M')$. 
Consequently, for each edge in $M'$, the edge together with all edges incident to it in $H$ use at least $k-1$ distinct colors.
Since $|\mathcal{C}_{H'}|\leq |\mathcal{C}_M|=|M|\leq k-2$, for every edge in $M'$ there exists an edge $f$ between $V(M)$ and $V(H')$ in $H$ such that $c(f)\notin \mathcal{C}_M$, where $c(f)$ denotes the color of $f$.

Let $M_0\subseteq M$ be the set of edges that are incident to at least one edge $f$ with an end in $V(H')$ and $c(f)\notin \mathcal{C}_M$. 
Then, for any $e\in M_0$, there is no edge $e'\in E(H')$ with $c(e')=c(e)$; otherwise, replacing $e$ by $e'$ and adding the corresponding edge between $V(M)$ and $V(H')$ would yield a larger rainbow matching, contradicting the maximality of $M$.
It follows that all colors appearing in $H'$ avoid $\mathcal C_{M_0}$.
Moreover, each edge of $M'$ contributes at least $|M_0|$ edges in $E_H(V(M),V(M'))$ whose color lies in $\mathcal C_{M_0}$ or outside $\mathcal C_M$, and each edge of $M\setminus M_0$ contributes at most $|M_0|$ edges in $E_H(V(M),V(M'))$ whose color lies in $\mathcal C_{M_0}$.
Thus, $|E_H(V(M_0),V(M'))|\geq (\frac{1}{4}-6\varepsilon)n|M_0|-k|M_0|$.
Therefore, there is a vertex $w\in V(M_0)$ satisfying
$$d_H(w)\geq \frac{(\frac{1}{4}-6\varepsilon)n|M_0|-k|M_0|}{2|M_0|}=\frac{n}{8}-3\varepsilon n-\frac{k}{2}\geq\frac{n}{8}-4\varepsilon n>\varepsilon n,$$
a contradiction.
\vspace{0.5em}

\noindent \textbf{Case 2.} Each color class is of size at least $\frac{k}{2}$.
\vspace{0.5em}

Let $M$ be a maximum rainbow matching of $G$. Since $|\mathcal{C}_G|\geq k$ and $|M|\leq k-1$, there is a color $c\in \mathcal{C}_G\setminus \mathcal{C}_M$. Let $H$ be the graph obtained from $G$ by deleting all vertices in $V(M)$. Then in $H$, there is no edge colored with $c$. On the other hand, in $G[V(M)]$, there are at most $\frac{k-1}{2}$ edges colored with $c$ (as each color class is an induced matching in $G$).  Therefore, there is at least one edge colored with $c$ in $G[V(M),V(H)]$.

Let $M_0\subseteq M$ be the set of edges that are incident to at least one edge $f$ with an end in $V(H)$ and $c(f)\in \mathcal{C}_G\setminus \mathcal{C}_M$. 
Then by a similar argument for \textbf{Case 1}, we can find a vertex $v$ of degree greater than $\varepsilon n$, which is a contradiction.
\end{proof}


\begin{proof}[Proof of Lemma \ref{bip}]

Choose $1/n\ll\mu,\gamma\ll\alpha\ll\beta,\varepsilon\ll\nu\ll\eta\ll1$. Let $G$ be a strongly edge-colored graph on $n$ vertices with $\delta(G)\geq n/2$ which is $\gamma$-close to $K_{n/2,n/2}$. By Lemma \ref{superbiclique}, there is a partition $V(G)=V_1\cup V_2$ with $|V_1|\leq|V_2|$ such that $G$ is an $(\alpha,\varepsilon,\nu)$-superextremal biclique with this partition.

Suppose first that $G[V_2]$ contains a rainbow matching $M$ of size $|V_2|-|V_1|$. By \cref{almostbd}, there is a $\mu n$-bounded subgraph $G'$ of $G$ such that $d_{G'}(v)\geq d_{G}(v)-\mu^2 n$ for every vertex $v\in V(G)$. Let $G''$ be the graph obtained from $G'$ by deleting all the edges $uv$ satisfying one of the following:
\begin{itemize}
  \item $c(uv)\in\mathcal{C}_M$,
  \item $uv\in G[V_1]\cup G[V_2]$,
  \item $u\in V_1, v\in V_2$ and $\max\{d_{G'}(u,V_2),d_{G'}(v,V_1)\}<(1/2-2\varepsilon)n$.
\end{itemize}
Let $\hat{G}=G''\cup M$. It is routine to check that $\hat{G}$ is an $(\alpha,\eta,\nu/2)$-superextremal biclique such that
\begin{itemize}
  \item $E(\hat{G}[V_1])=\emptyset$ and $E(\hat{G}[V_2])=M$,
  \item $\max\{d_{\hat{G}}(u,V_2),d_{\hat{G}}(v,V_1)\}\geq(1/2-\eta)n$ for all $u\in V_1, v\in V_2$ with $uv\in E(\hat{G})$,
  \item each edge in $M$ has a unique color in $E(\hat{G})$,
  \item the strong edge coloring of $\hat{G}$ is $\mu n$-bounded.
\end{itemize}
Thus, by Lemma \ref{HCbiclique}, $\hat{G}$ has a Hamilton cycle containing $M$. By Lemma \ref{switchbi}, for every oriented Hamilton cycle $\mathop{\mathcal{H}}\limits^{\rightharpoonup}$ of $\hat{G}$ and every edge $e\in E(\mathcal{H})\setminus M$, there are at least $\beta n^2$ admissible switchings $\mathcal{S}_i(\mathop{\mathcal{H}}\limits^{\rightharpoonup};e,e')$ for some $e'\in E(\hat{G})\setminus E(\mathcal{H})$ and $i\in\{1,2\}$. Then by Lemma \ref{switch}, $\hat{G}$ contains a rainbow Hamilton cycle, which is also a rainbow Hamilton cycle of $G$.

If $G[V_2]$ contains no rainbow matching of size $|V_2|-|V_1|$, then $|\mathcal{C}_{G[V_2]}|=|V_2|-|V_1|-1$ follows from Lemma \ref{matching} and the fact that $\delta(G[V_2])\geq \frac{n}{2}-|V_1|=\frac{|V_2|-|V_1|}{2}$. Therefore, $G[V_2]$ must be a $\left(\frac{|V_2|-|V_1|}{2}\right)$-regular graph, since $G[V_2]$ is strongly edge-colored.
\end{proof}

\section{Concluding remarks}

In this paper, by establishing a connection between strongly edge-colored graphs and $\mu n$-bounded graphs, 
we determine the optimal minimum degree condition guaranteeing the existence of rainbow Hamilton cycles in strongly edge-colored graphs. 
In recent years, Dirac-type problems concerning rainbow pancyclicity of strongly edge-colored graphs have also been extensively studied \cite{panconnect,edgepan,pairpan,vertexpan}. 
We believe that our approach shows promising potential for addressing these problems.

\bibliography{reference.bib}

\end{document}